\definecolor{corange}{rgb}{0.93, 0.57, 0.13}
\newcommand{\bs}[1]{\boldsymbol{#1}}
\def \bx{\bs x}
\def \by{\bs y}
\def \bz{\bs z}
\def \d{{\rm d}}
\def \R{\mathbb{R}}
\def \X{\widetilde{X}}
\def \Y{\widetilde{Y}}
\def \Z{\widetilde{Z}}
\def \E{\mathbb{E}}
\newtheorem{lem}{Lemma}[section]
\newtheorem{thm}{Theorem}[section]
\newtheorem{rem}{Remark}[section]
\newtheorem{ass}{Assumption}[section]
\crefname{hypothesis}{Hypothesis}{Hypotheses}
\title{A derivative-free localized stochastic method for very high dimensional semilinear parabolic PDEs\thanks{Submitted to the editors DATE.}}
\author{Shuixin Fang\thanks{Institute of Computational Mathematics and Scientific/Engineering Computing, Academy of Mathematics and Systems Science, Chinese Academy of Sciences, Beijing, 100190, P. R. China.
  (\email{sxfang@amss.ac.cn}). }
\and Changtao Sheng\thanks{School of Mathematics, Shanghai University of Finance and Economics, Shanghai 200433, China.
  (\email{ctsheng@sufe.edu.cn}).}
\and Bihao Su\thanks{School of Mathematics and Statistics, Hainan University, Haikou 570100, China.
  (\email{bihaosu@hainanu.edu.cn}).} 
\and Tao Zhou\thanks{Institute of Computational Mathematics and Scientific/Engineering Computing, Academy of Mathematics and Systems Science, Chinese Academy of Sciences, Beijing, 100190, P. R. China.
  (\email{tzhou@lsec.cc.ac.cn}). }}
\begin{document}

\maketitle

\begin{abstract}
We develop a mesh-free, derivative-free, matrix-free, and highly parallel localized stochastic method for high-dimensional semilinear parabolic PDEs. The efficiency of the proposed method is built upon four essential components:
(i) a martingale formulation of the forward backward stochastic differential equation (FBSDE);
(ii) a small scale stochastic particle method for local linear regression (LLR);
(iii) a decoupling strategy with a matrix-free solver for the weighted least-squares system used to compute $\nabla u$;
(iv) a Newton iteration for solving the univariate nonlinear system in $u$.
Unlike traditional deterministic methods that rely on global information, this localized computational scheme not only provides explicit pointwise evaluations of $u$ and $\nabla u$ but, more importantly, is naturally suited for parallelization across particles. In addition, the algorithm avoids the need for spatial meshes and global basis functions required by classical deterministic approaches, as well as the derivative-dependent and lengthy training procedures often encountered in machine learning. More importantly, we rigorously analyze the error bound of the proposed scheme, which is fully explicit in both the particle number $M$ and the time step size $\Delta t$. Numerical results conducted for problem dimensions ranging from $d=100$ to $d=10000$ consistently verify the efficiency and accuracy of the proposed method. Remarkably, all computations are carried out efficiently on a standard personal computer, without requiring any specialized hardware. These results confirm that the proposed method is built upon a principled design that not only extends the practically solvable range of ultra-high-dimensional PDEs but also maintains rigorous error control and ease of implementation. 

\end{abstract}
\begin{keywords}
High-dimensional PDEs, FBSDEs, Local linear regression, Stochastic particle methods, Error analysis, Parallel computing
\end{keywords}

\begin{MSCcodes}
65C30, 65M75, 60H30, 65N15, 68W10
\end{MSCcodes}

\maketitle

\section{Introduction}
\label{intro}
Partial differential equations (PDEs) in high dimensions constitute a fundamental modeling tool across diverse scientific and engineering disciplines, including quantitative finance, statistical physics, modern control, and learning systems. Typical examples comprise the Schrödinger equation in quantum many-body systems, the Black--Scholes equation in financial mathematics, and Hamilton--Jacobi--Bellman equations (HJB) in control and reinforcement learning \cite{HanJentzenE2018}. Despite their central role, the numerical treatment of such PDEs faces the notorious \emph{curse of dimensionality} (CoD), where the computational cost grows exponentially with the dimension. Classical deterministic discretization methods based on meshes or global bases, such as finite differences, finite elements, and spectral methods, quickly become infeasible once the dimension exceeds a moderate scale. Sparse grids markedly reduce degrees of freedom versus tensor-product meshes and remain effective up to about $d\approx10$ for smooth, mildly anisotropic solutions \cite{BungartzGriebel2004,Smolyak1963,Shen2010}. As $d$ and anisotropy increase, accuracy and conditioning degrade, and complexity grows exponentially in $d$, which restricts practical use. Thus, deterministic approaches remain fundamentally constrained by CoD when facing genuinely high-dimensional settings. 
 
Deep learning has established itself as a powerful tool for solving PDEs, and in recent years it has demonstrated notable strength in representing high-dimensional functions and mitigating the CoD, thereby emerging as a leading approach for high-dimensional PDEs. Existing methods can be broadly divided into two categories: i). direct learning; and ii). stochastic differential equations (SDE) based learning. Representative direct learning include Physics-Informed Neural Networks (PINN)\cite{RaissiPerdikaris2019}, the Deep Galerkin Method \cite{SirignanoSpiliopoulos2018}, the Deep Ritz method \cite{EYu2018}, and Weak Adversarial Networks  \cite{Zang2020}. In these methods, losses are computed at randomly sampled points, enabling efficient parallelization, while automatic differentiation for PDE derivatives remains challenging in very high dimensions, especially for $d \times d$ Hessians. To mitigate this issue, a stochastic-dimension gradient-descent variant of PINNs has been proposed \cite{Hu2024} and shows strong potential for ultra-high-dimensional PDEs.

In contrast to direct learning, SDE-based learning recast the problem as a backward stochastic differential equation (BSDE), which makes them inherently \emph{derivative-free}. In pioneering work, Han et al. \cite{EHan2017,HanJentzenE2018} introduced a deep BSDE framework that parameterizes the solution with neural networks and enforces the equations via residual minimization, solving PDEs in up to 100 dimensions. Related approaches include Deep Splitting and Deep Galerkin, and others (see, e.g.,\cite{BeckBecker2021,HurePhamWarin2020,Saporito2021,Zhang2022,Lu2024,Frey2025} and the references therein). Recently, Cai et al. \cite{CaiFangZhou2025,CaiFangZhou2024} introduced SOC-MartNet, a martingale-inspired architecture to solve HJB equations without explicit controls, and extended it to ultra-high-dimensional quasilinear parabolic equations, where it demonstrated strong performance on large-scale benchmarks. They later proposed a deep random difference method to reduce variance and improve stability \cite{CaiFangZhou2025Deep}. Despite these advances, several challenges remain: limited stability of the optimization procedure, pronounced sensitivity to hyperparameters, and a lack of rigorous a priori error estimates.

Similar to deep learning, stochastic methods constitute another class of numerical approaches that effectively mitigate the CoD and are widely applied across numerous scientific and engineering fields (see, e.g., \cite{Lord2014,Shao2020,Horton2020,Lei2025}). Unlike the black-box nature of deep learning, stochastic methods operate in a more transparent framework, which makes them suitable for error analysis. Their core is a probabilistic representation: the Feynman Kac formula for linear/nonlinear problems and FBSDEs for nonlinear problems, which eliminates explicit derivatives and replaces spatial meshes with Monte Carlo samples and conditional expectations (cf. \cite{PardouxPeng1990,MaYong1999,KloedenPlaten1992}). Recent advances, such as walk-on-spheres, show clear advantages for anomalous diffusion and other nonlocal effects (see, e.g., \cite{ShengSuXu2023,ShengSuXu2024}), because jump processes accelerate stochastic simulation relative to Brownian motion. As a result, for nonlocal problems with $d\ge 3$, stochastic methods are often more efficient than deterministic approaches. Nonetheless, their strengths lie primarily in high-dimensional linear cases, whereas nonlinear problems remain a substantial challenge (cf.~\cite{Yang2023}).

Extensive efforts have been made to confront the difficulties introduced by nonlinearities in stochastic algorithms. For example, probabilistic representations based on labeled branching diffusions with Malliavin automatic-differentiation weights absorb nonlinearity into branching, handle the $\nabla u$ term, and yield a Monte Carlo–ready random-variable representation (cf.~\cite{Henry-Labordere2019}). However, longer horizons or stiff dynamics cause rapid variance growth unless control variates and other variance-reduction techniques are used \cite{FahimTouziWarin2011,Henry-Labordere2019}. Hence, for nonlinear PDEs, probabilistic Monte Carlo methods based on BSDEs are more commonly used. These methods pair path simulation with regression-based estimators of conditional expectations, thereby avoiding spatial meshes, and proceed with a backward scheme to approximate $E_k[\cdot]$ via various regression methods (cf. \cite{GobetLemorWarin2005,BouchardTouzi2004,Zhang2004,BenderDenk2007}). In this way they retain the dimension-agnostic sampling of Monte Carlo and the clean measurability structure induced by filtrations. Nevertheless, accuracy and efficiency remain constrained by the bias--variance trade-off, the expressiveness and conditioning of the approximation spaces, and the distribution of samples in high-dimensional neighborhoods.

In spite of these advances, key gaps remain in stochastic methods for high-dimensional semilinear PDEs:
(i) the absence of a mesh-free, fully parallel solver capable of mitigating the CoD and providing dimension-independent comprehensive error analysis;
(ii) the lack of efficient and robust strategies to reconstruct $u$ and $\nabla u$ from particle ensembles.
The aim of this paper is to develop a mesh-free, derivative-free, matrix-free, and highly parallel localized stochastic method for high-dimensional semilinear PDEs, and to provide a rigorous error analysis.
The novel contributions of this article to the construction and analysis of stochastic method for semilinear parabolic equation in very high dimensions include the following several aspects.
\begin{itemize}
\item {\bf Derivative-free and pointwise local solver}: By casting the semilinear equation (see \eqref{mainprob}) as a corresponding FBSDE and using a martingale formulation, we rigorously link PDEs to stochastic processes. This connection underpins two key advantages of our stochastic algorithm over traditional deterministic methods. First, it entirely eliminates derivative computations, including gradients and Hessians, which are prohibitively expensive in ultra high dimensions, even for deep neural networks. Second, it transforms global discretization into a genuinely local solver, enabling scalable, pointwise computations that are both simple and naturally parallel. 

\item {\bf Small-scale local particle method}:
We employ Gaussian weights to enhance particle discriminability and select all particles in the ensemble, thereby eliminating the radius tuning required in conventional LLR. This contrasts sharply with $k$-nearest neighbors (kNN), which in high dimensions tends to induce inflated radii and distance concentration (cf.~\cite{Aggarwal2001}). In our analysis, the particle number $M$ enters only through an exponentially suppressed bad-event probability ${\rm e}^{-cM}$ (cf.~\eqref{eq:grad-bv}), so a moderate $M$ suffices, and the numerical evidence in Section~\ref{numexp} confirms that $M\approx100$ already attains accurate results.

\item {\bf Decoupled scheme for $u$ and $\nabla u$ and a matrix-free solver}: 
Unlike existing work \cite{GobetLemorWarin2005}, which relies on Picard iterations to solve the coupled nonlinear system involving $u$ and $\nabla u$ and often causes a dramatic increase in computational cost in high dimensions, we adopt a decoupling strategy. Specifically, we first approximate the gradient $\nabla u$ via LLR by solving a least-squares problem. The associated $(d+1)\times(d+1)$ linear system is solved in a matrix-free manner, so the storage requirement is $\mathcal{O}(d)$ and the per-time-step cost is only $\mathcal{O}(Md)$, where $M$ denotes the number of particles. Once $\nabla u$ is obtained, the remaining univariate nonlinear equation in $u$ can be solved straightforwardly. This design enables efficient handling of problems in very high dimensions.

\item {\bf Analyzable computational framework}: 
Built on an interpretable computational framework, our algorithm admits a rigorous error bound of $\mathcal{O}(\Delta t)+\mathcal{O}(\Delta t\,e^{-cM})$ (cf.\,Theorem \ref{ITE}), where $M$ denotes the number of particles and $\Delta t$ the time-step size. This result demonstrates first-order temporal accuracy and requires only the selection of an appropriate number of particles, and these theoretical findings are fully corroborated by numerical experiments. 

\end{itemize}

The rest of the paper is organized as follows. In Section~\ref{mainalgo}, we introduce the standing assumptions and provide a detailed description of the complete stochastic algorithm. Section~\ref{erranalysis} presents the necessary preparations for the theoretical analysis and then establishes rigorous convergence results. The numerical aspects are discussed in Section~\ref{numexp}, where extensive high-dimensional numerical experiments are evaluated to demonstrate the accuracy, efficiency, and robustness of the proposed method. We conclude in Section~\ref{conclusion} with final remarks and an outlook on future research directions.

\section{Main algorithm}\label{mainalgo}
In this section, we first present the problem together with the associated FBSDEs, and then provide a detailed description of the proposed stochastic algorithm. The procedure begins with employing the martingale formulation for time discretization. Subsequently, a local stochastic particle method combined with a localized reconstruction strategy is introduced, and a Newton iteration is finally applied to resolve the resulting pointwise nonlinear systems.

\subsection{Problem setting}\label{problemsetting}
Consider the following semilinear parabolic PDE defined on $[0,T]\times\mathbb{R}^d$:  
\begin{equation}
\label{mainprob}
\begin{cases}
(\partial_{t} + \mathcal{L}) u(t,\bx)
+ f\bigl(t,\bx,u(t,\bx),\sigma^\top\nabla u(t,\bx)\bigr) = 0, 
& (t,\bx)\in[0,T)\times\mathbb{R}^d,\\[1ex]
u(T,\bx) = g(\bx), & \bx\in\mathbb{R}^d,
\end{cases}
\end{equation}
where $u:[0,T]\times\mathbb{R}^d \to \mathbb{R}$ is the unknown scalar function,
and $\mathcal{L}$ denotes the infinitesimal generator of the underlying Itô (or Lévy-type) process,  
\begin{equation*}
\mathcal{L}u(t,\bx) 
= \tfrac{1}{2} \text{Tr} \big(\sigma(t,\bx)\sigma(t,\bx)^{\top} \text{Hess}_{\bx}u(t,\bx)\big) 
+ \langle \mu(t,\bx), \nabla u(t,\bx)\rangle .
\end{equation*}
Here $\nabla u$ and $\text{Hess}_{\bx}u$ denote the gradient and the Hessian of $u$ with respect to $\bx$, 
$\sigma:[0,T]\times\mathbb{R}^d\to\mathbb{R}^{d\times d}$ is the matrix-valued diffusion coefficient, 
$\mu:[0,T]\times\mathbb{R}^d\to\mathbb{R}^d$ is the vector-valued drift coefficient, 
$f:[0,T]\times\mathbb{R}^d\times\mathbb{R}\times\mathbb{R}^d\to\mathbb{R}$ is a nonlinear source term, 
and $g:\mathbb{R}^d\to\mathbb{R}$ prescribes the terminal condition.  
In particular, we are often interested in evaluating the solution at the initial time $t=0$ and spatial location $\bx=\xi$ for some $\xi\in\mathbb{R}^d$.

In the semilinear case, $u$ admits an FBSDE characterization, whereas if the nonlinearity depends explicitly on $\nabla^2 u$, one may employ second-order BSDEs (cf.~\cite{Cheridito2007}) or adopt local surrogate models for the Hessian. In this work, we focus on a very high-dimensional setting $d \gg 1$ where the nonlinearity $f$ 
involves only gradient terms. To this end, we introduce the stochastic processes
\begin{equation}\label{def_YZ}
Y_t = u(t,X_t),    \;\;\;
Z_t = \sigma^\top(t,X_t) \nabla u(t,X_t),
\end{equation}
where the forward process $\{X_t\}_{t \ge 0}$ solves the following SDE
\begin{equation}\label{dXt}
\d X_t= \mu(t,X_t) \d t + \sigma(t,X_t) \d W_t,   \;\;\;  X_0 = \bx,
\end{equation}
and $W_t$ is a $d$-dimensional Brownian motion.  
It then follows that \eqref{mainprob} is equivalent to the coupled forward--backward system
\begin{equation}\label{FBsyst}
\begin{cases}
\d X_t = \mu(t,X_t) \d t + \sigma(t,X_t) \d W_t, & X_0 = \bx, \\[6pt]
\d Y_t = - f\bigl(t,X_t,Y_t,Z_t\bigr) \d t + Z_t^\top \d W_t, & Y_T = g(X_T).
\end{cases}
\end{equation}

This FBSDE formulation provides the foundation for probabilistic algorithm, 
as it allows the original high-dimensional PDE to be reformulated as a system of stochastic equations 
that can be solved by various discretization techniques for FBSDEs, 
including more recent works based on deep neural networks (see e.g.,\cite{CaiFangZhou2025,CaiFangZhou2025Deep,CaiFangZhou2024,HanJentzenE2018}). 
\begin{ass}[{\bf Global Lipschitz and linear growth}]\label{ass:HLG}
Let $\mu:[0,T]\times\R^d\to\R^d$, $\sigma:[0,T]\times\R^d\to\R^{d\times d}$,
$f:[0,T]\times\R^d\times\R\times\R^{d}\to\R$, and $g:\R^d\to\R$.
There exist constants $L,C>0$ such that for all $t\in[0,T]$, $\bx,\bx^\prime\in\R^d$,
$\by,\by^\prime\in\R$, and $\bz,\bz^\prime\in\R^{d}$, the following hold
\begin{enumerate}
\item{Global Lipschitz}:
\begin{equation*}\begin{split}
&\|\mu(t,\bx)-\mu(t,\bx^\prime)\| +\|\sigma(t,\bx)-\sigma(t,\bx^\prime)\| \le L\,\|\bx-\bx^\prime\|,\\
&|f(t,\bx,\by,\bz)-f(t,\bx^\prime,\by^\prime,\bz^\prime)| \le L\bigl(\|\bx-\bx^\prime\|+|\by-\by^\prime|+\|\bz-\bz^\prime\|\bigr),\\
&|g(\bx)-g(\bx^\prime)| \le L\,\|\bx-\bx^\prime\|;
\end{split}\end{equation*}
\item{Linear growth}: 
\begin{equation*}
\|\mu(t,\bx)\| + \|\sigma(t,\bx)\| + |f(t,\bx,\by,\bz)| + |g(\bx)|\le C\bigl(1 + \|\bx\| + |\by| + \|\bz\|\bigr).
\end{equation*}
\end{enumerate}
Here $\|\cdot\|$ denotes the Euclidean norm in the relevant space.
\end{ass}

To ensure the well-posedness of this FBSDE formulation, we recall below a classical result 
under standard Lipschitz and growth conditions (cf.~\cite{Higham2005}).
\begin{lem} 
\label{Lipcond} 
Suppose {\rm Assumption~\ref{ass:HLG}} holds. We further assume that $\sigma\sigma^\top$ is uniformly nondegenerate, i.e., 
there exists $\lambda>0$ such that
\begin{equation*}
\xi^\top \big(\sigma(t,\bx)\sigma(t,\bx)^\top\big)\xi 
\ge \lambda \|\xi\|^2, 
\quad \forall \xi\in\mathbb{R}^d, \;\; (t,\bx)\in[0,T]\times\mathbb{R}^d,
\end{equation*}
then the FBSDE admits a unique adapted solution
$(X,Y,Z)\in \mathcal{S}^2(\mathbb{R}^d)\times \mathcal{S}^2(\mathbb{R})\times \mathcal{H}^2(\mathbb{R}^d),$
where $\mathcal{S}^2$ denotes the space of square‐integrable continuous adapted processes, and 
$\mathcal{H}^2$ denotes the space of square‐integrable predictable processes.
\end{lem}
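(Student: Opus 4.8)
The plan is to exploit the fact that the forward--backward system \eqref{FBsyst} is in fact \emph{decoupled}: the drift $\mu$ and diffusion $\sigma$ depend on $(t,X_t)$ alone, with no feedback from $(Y_t,Z_t)$. Consequently I can construct the solution in two independent stages — first the forward diffusion $X$, and then the backward pair $(Y,Z)$ driven by the now-known path $X$ — so that no genuine forward--backward fixed point is needed, which is what keeps the argument within classical theory.

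First I would solve the forward SDE $\d X_t = \mu(t,X_t)\,\d t + \sigma(t,X_t)\,\d W_t$ with $X_0=\bx$. Under the global Lipschitz and linear-growth bounds of Assumption~\ref{ass:HLG}, the standard Itô existence--uniqueness theorem (cf.~\cite{KloedenPlaten1992}) yields a unique strong solution $X\in\mathcal{S}^2(\R^d)$; the argument is a Picard iteration rendered contractive through the Burkholder--Davis--Gundy and Doob maximal inequalities together with Gronwall's lemma. The same estimates furnish the moment bound $\E[\sup_{0\le t\le T}\|X_t\|^2]<\infty$, which I will need below.

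Next, treating $X$ as fixed data, I would solve the backward equation $\d Y_t=-f(t,X_t,Y_t,Z_t)\,\d t+Z_t^\top\,\d W_t$ with $Y_T=g(X_T)$ by verifying the hypotheses of the Pardoux--Peng theorem (cf.~\cite{PardouxPeng1990}). The terminal value $g(X_T)$ lies in $L^2$ since $g$ has linear growth and $X_T\in L^2$; the map $(y,z)\mapsto f(t,X_t,y,z)$ is uniformly Lipschitz by Assumption~\ref{ass:HLG}; and $f(\cdot,X_\cdot,0,0)\in\mathcal{H}^2$ by the linear-growth bound on $f$ combined with the forward moment estimate. With these in hand, the martingale-representation-based contraction argument (carried out in $\mathcal{S}^2\times\mathcal{H}^2$ under an exponentially weighted norm) delivers the unique adapted pair $(Y,Z)\in\mathcal{S}^2(\R)\times\mathcal{H}^2(\R^d)$.

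I expect the only genuine subtlety to be this integrability verification, which amounts to propagating the square-integrability of the forward solution through the linear-growth bounds to both the terminal data and the driver at the origin; everything else is a direct appeal to well-posedness theory. Finally, I note that the uniform nondegeneracy of $\sigma\sigma^\top$ is not actually used in this existence--uniqueness proof: it is inessential to the decoupled well-posedness and enters only afterwards, in identifying $Z_t$ with $\sigma^\top\nabla u(t,X_t)$ and in the regularity theory linking \eqref{FBsyst} back to the PDE \eqref{mainprob}.
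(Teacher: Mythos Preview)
Your proposal is correct and in fact supplies more than the paper does: the paper does not prove this lemma at all but merely cites it as a classical result (``we recall below a classical result under standard Lipschitz and growth conditions (cf.~\cite{Higham2005})''). Your two-stage argument---first solving the forward SDE via the standard It\^o existence theorem, then invoking Pardoux--Peng for the BSDE with the now-fixed path $X$---is exactly the canonical route for decoupled FBSDEs, and your integrability checks (terminal data in $L^2$, driver at the origin in $\mathcal{H}^2$) are the right verifications. Your closing remark that the uniform nondegeneracy of $\sigma\sigma^\top$ is not used in the well-posedness argument itself, but only later in the identification $Z_t=\sigma^\top\nabla u(t,X_t)$, is also accurate and worth noting.
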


\subsection{Time Discretization based on Martingale formulation}
We construct a uniform time grid on the interval $[0,T]$ by dividing it into $N$ subintervals of equal length $\Delta t = T/N$, and denote the discrete time nodes by $t_k = k\Delta t$ for $k=0,1,\dots,N$.  
Starting from the backward stochastic differential equation \eqref{FBsyst}:
$$\d Y_t = -f(t, X_t, Y_t, Z_t) \d t + Z_t^\top  \d W_t,$$
together with the representation $Z_t = \sigma^\top(t, X_t)\nabla u(t, X_t)$,  
we integrate both sides over the subinterval $[t_k, t_{k+1}]$ to obtain
$$Y_{k+1} - Y_k
= -\int_{t_k}^{t_{k+1}} f(s, X_s, Y_s, Z_s) \d s
+ \int_{t_k}^{t_{k+1}} Z_s^\top  \d W_s,$$
here $Y_{k} = Y_{t_k} = u(t_{k},X_{t_{k}})$. 
Because the dynamics evolve backward in time \cite{ZhaoChenPeng2006}, this relation can be rearranged as
$$Y_k = Y_{k+1} + \int_{t_k}^{t_{k+1}} f(s, X_s, Y_s, Z_s) \d s 
- \int_{t_k}^{t_{k+1}} Z_s^\top \d W_s.$$
Noting that $Y_{k}$ is $\mathcal{F}_{t_k}$-measurable, we introduce the conditional expectation with respect to the filtration $\mathcal{F}_{t_k}$, namely $\E_k[\cdot] := \E[\cdot \mid \mathcal{F}_{t_k}]$. Using the fact that the Itô integral has zero conditional expectation, i.e.,
$\E_k\big[\int_{t_k}^{t_{k+1}} Z_s^\top   dW_s\big] = 0,$
we obtain the following recursion:
\begin{equation}
\label{Ytk}
Y_k
= \E_k\Bigl[Y_{k+1} + \int_{t_k}^{t_{k+1}} f\bigl(s,X_s,Y_s,Z_s\bigr)  \d s\Bigr].
\end{equation}
To discretize the integrals over $[t_k, t_{k+1}]$, we apply a first-order Euler--Maruyama approximation by freezing the coefficients at $t_k$, which gives
\begin{equation*}
\int_{t_k}^{t_{k+1}} f(s,X_s,Y_s,Z_s) \d s
 \approx  f\bigl(t_k,X_{k},Y_k,Z_k\bigr)  \Delta t,
\quad
\int_{t_k}^{t_{k+1}} Z_s^\top   \d W_s
 \approx  Z_k^\top \Delta W_k,
\end{equation*}
where $\Delta W_k := W_{t_{k+1}} - W_{t_k}$ denotes the Brownian increment and $Z_{k} = Z_{t_{k}}$. 
Substituting these approximations into the conditional expectation relation \eqref{Ytk} and denoting the numerical solution by $\{\Y_k\}_{k=0}^N$ yield the semi-discrete backward scheme
\begin{equation}
\label{Ytkdiscrete}
\Y_{k}=\E_k\Bigl[\Y_{k+1} + f\bigl(t_k,X_{k},\Y_{k},\Z_k\bigr)\Delta t \Bigr],\;\;\; 0\leq k\leq N.
\end{equation}
With this foundation, we next focus on solving univariate nonlinear systems involving expectation operators using a local stochastic particle methods.

\subsection{Stochastic particle method}
This subsection develops a stochastic particle approximation of the conditional expectation in \eqref{Ytkdiscrete}. For the $m$-th particle at time $t_k$, the conditional expectation $\E_k[\cdot]$ is taken with respect to the filtration $\mathcal{F}_{t_k}$ generated by the ensemble of particle positions $\mathcal{S}=\{X_k^{1},\ldots,X_k^{M}\}$. More precisely, since $\Y_k$ and $\Z_k$ are $\mathcal{F}_{t_k}$-measurable, we approximate, for each particle $X_k^{m}$, the conditional expectation in \eqref{Ytkdiscrete} by the empirical average over all particles at time $t_{k+1}$. In practice, one may certainly select a small subset of the nearest particles from $\mathcal{S}$ to perform the regression instead of using all particles. However, since our algorithm uses only a small number of particles (typically $M\le 100$) and the computation for each particle is fully parallelizable, we using all particles for the regression for notational simplicity.

To this end, we first simulate $M$ independent particle trajectories $\{X^{j}_k\}_{j=1}^M$ by the Euler--Maruyama discretization of the forward SDE, and denote the numerical solution of $j$-th particle at time $t_k$ by $\X_k^j$:
\begin{equation}\label{Xk}
\X^{j}_{k+1}= \X^{j}_k + \mu\bigl(t_k,\X^{j}_k\bigr)\Delta t 
+ \sigma\bigl(t_k,\X^{j}_k\bigr)\Delta W_k^{j},  
\quad j=1,2,\cdots,M,
\end{equation}
where $\Delta W_k^{j} \sim \mathcal{N}(0, \Delta t I)$ are independent Brownian increments.

Since both $\Y_k$ and $\Z_k$ are $\mathcal{F}_{t_k}$--measurable, the solution of discrete scheme \eqref{Ytkdiscrete} can, for each particle $\X_k^{m}$, be approximated as
\begin{equation}
\label{discreteYkm}
\begin{split}
\Y_k^{m} 
&= \E_k\Big[\Y_{k+1}    \big| \X^m_k\Big] +  f\bigl(t_k, \X_k^{m}, \Y_k^{m}, \Z_k^{m}\bigr) \Delta t  \\
&\approx \frac{1}{M} \sum_{j=1}^{M} \Y_{k+1}^{j} +f\bigl(t_k, \X_k^{m}, \Y_k^{m}, \Z_k^{m}\bigr) \Delta t,\;\;\;        1\leq m\leq M,
\end{split}
\end{equation}
where the conditional expectation is estimated by a local averaging procedure over those stochastic particles $\{\X_k^{j}\}_{j=1}^{M}$ whose positions fall within a neighborhood of $\X_k^{m}$. By recursively applying this procedure \eqref{discreteYkm} backward in time from $k = N-1$ to $k = 0$, the approximation of the solution at $t=0$ is given by the particle average
\begin{equation*}
\Y_0 = \frac{1}{M} \sum_{m=1}^M \Y_0^{m}.
\end{equation*}

\begin{rem}{
A salient feature of our method is its sample efficiency: accuracy is attainable with few particles, often with only $100$. This accords with {\rm Theorem~\ref{ITE}}, where the error bound contains $\Delta t,e^{-c M}$. An appropriate choice of $M$ ensures first-order accuracy. However, for challenging problems, more particles may be needed to maintain accuracy. In such cases, with a suitable $\varepsilon_k$, the method can be viewed as a \emph{random batch method} {\rm (cf.~\cite{jin2020})}, where reconstruction at each point uses only a fixed, small set of nearest neighbors, keeping the overall computational cost $O(M)$.} 
\end{rem}

\subsection{Computation of $\{Z_k^m\}_{m=1}^M$ via Local Linear Regression}
The principal difficulty in efficiently solving \eqref{discreteYkm} arises from its structure as a coupled $(d+1)$-dimensional nonlinear system in the variables $\Y_k^m$ and $\Z_k^{m}$. The approach proposed in \cite{GobetLemorWarin2005} relies on applying Picard iterations directly to this $(d+1)$-dimensional system, in conjunction with indicator functions on hypercubes for function reconstruction. While effective in low dimensions, this strategy becomes computationally prohibitive as the dimension increases. To overcome this challenge, we adopt a decoupling strategy: the $d$-dimensional component $\Z_k^{m}$ is first approximated, after which the resulting univariate nonlinear system in $\Y_k^m$ is solved.
Therefore, the objective of this subsection is to estimate $Z^{m}_k = \sigma^\top(t_k, X_k^{m})   \nabla u(t_k, X_k^{m})$
by computing the spatial gradient $\nabla u(t_k,X_k^{m})$. To this end, we approximate the function $u(t_{k+1}, \cdot)$ in a neighborhood of $\X_k^{m}$ via a first-order Taylor expansion:
\begin{equation}\label{taylorexp}
u(t_{k+1}, \cdot\,) \approx u(t_k, \X_k^{m}) + \partial_t u(t_k, \X_k^{m})   \Delta t + \nabla u(t_k, \X_k^{m})^\top (\;\cdot - \X_k^{m}).
\end{equation}
where $\cdot$ denotes the spatial variable and the time is fixed at $t_{k+1}$. 
To approximate the gradient, we employ a local linear regression using all particles $\{\X_k^{j}\}_{j=1}^{M}$ within the $\varepsilon_k$-neighborhood of $\X_k^{m}$.
It is important to note that, due to the backward-in-time nature of the algorithm, the values $u(t_{k+1}, \X^{j}_{k+1})$ have already been computed in the previous step, whereas the values at $t_k$ are yet to be updated.

We now present the detailed procedure for estimating the gradient $\nabla u(t_k,\X_k^{m})$ at time $t_k$. Since $\X^{j}_{k+1} =\X^{j}_k + \Delta X^{j}$ and $\Delta X^{j}$ is known, the value $u(t_{k+1}, \X^{j}_{k+1})$ can be regarded as a function of $\X^{j}_k$. In the fitting process, we directly perform a linear regression in the $\X_k$-space using the pairs $\{(\X_k^{j}, \Y_{k+1}^{j})\}_{j=1}^{M}$. 
To this end, we adopt a local linear approximation centered at the anchor point $\X_k^{m}$. 
For notational convenience, we set  
$$\alpha := u(t_k,\X_k^{m}) + \partial_t u(t_k,\X_k^{m}) \Delta t\in \R,$$  
and define 
$$\alpha_{\bx} := \nabla u(t_k,\X_k^{m}) = \big(\partial_{x_1} u(t_k,\X_k^{m}), \ldots, \partial_{x_d} u(t_k,\X_k^{m})\big)^\top \in \R^d.$$
Then, for each $\X_k^{m}$, the unknown coefficients $\boldsymbol{\alpha}:=(\alpha;\alpha_{\bx})\in\R^{d+1}$ in \eqref{taylorexp} are obtained by minimizing the weighted least-squares functional:
\begin{equation}\label{leastsq}
J(\boldsymbol{\alpha})=\sum_{j =1}^{M}
w_j \Big(\Y_{k+1}^{j}- \alpha- {\alpha}_{\bx}^\top \big(\X_k^{j} - \X_k^{m}\big)\Big)^2, \quad 1\leq m\leq M,
\end{equation}
where $w_j$ denotes the weight assigned to each neighbor. When the particle distribution is non-uniform, weighted least squares can significantly reduce estimation variance.

 We compute the coefficient vector $\boldsymbol{\alpha} \in\mathbb{R}^{d+1}$ by minimizing the weighted sum of squared residuals, where the weights $w_j$ are determined based on the proximity of each $\X_k^{j}$ to the anchor point $\X_k^{m}$. Specifically, we define
\begin{equation}
\label{Dj}
D_j := \X_k^{j} - \X_k^{m}\in \R^d, \;\;\;
w_j := \frac{ K\big( \frac{ \| D_j \| }{\varepsilon_k} \big) }{ \sum_{i =1}^{M} K\big( \frac{ \| D_i \| }{\varepsilon_k} \big) },
\end{equation}
where $K$ is a given kernel function (e.g., the Gaussian kernel), and $\varepsilon_k > 0$ represents the maximum distance between point $ \X_k^{j}$ and $ \X_k^{m}$.
As a result, the weighted least squares objective \eqref{leastsq} reads
\begin{equation}
\label{WLS_objective}
J(\boldsymbol{\alpha}) := \sum_{j=1}^{M}
w_j \left( \Y_{k+1}^{j} - \alpha -  \mathbf{\alpha}_{\bx}^\top D_j \right)^2.
\end{equation}
 To minimize the objective functional \eqref{WLS_objective} with respect to $\boldsymbol{\alpha} \in\mathbb{R}^{d+1}$, we set its gradient to zero, leading to the normal equations:
 \begin{equation}\label{least2}
\begin{split}
&\frac{\partial J}{\partial \alpha}
=-2\sum_{j=1}^{M} w_j   (\Y_{k+1}^{j} - \alpha   - \mathbf{\alpha}_{\bx}^\top D_j)
=0, 
\\&\frac{\partial J}{\partial \alpha_{\bx}}
=-2\sum_{j =1}^{M} w_j   (\Y_{k+1}^{j} - \alpha   - \mathbf{\alpha}_{\bx}^\top D_j)\cdot D_{j}
=0.
\end{split}
\end{equation}
We now define the design matrix, response vector, and weight matrix as
\begin{equation}
\label{AY}
\boldsymbol{D} =\begin{pmatrix}
1   & D_{1}^{\top}\\
1  & D_{2}^{\top}\\
\vdots   & \vdots\\
1   & D_{M}^{\top}\end{pmatrix}\in \R^{M\times(d+1)} ,
\;\; \boldsymbol{Y} =\begin{pmatrix}
\Y_{k+1}^{1}\\
\Y_{k+1}^{2}\\
 \vdots\\
\Y_{k+1}^{M}\end{pmatrix}\in\R^{M},
\;\; \boldsymbol{W} = \begin{pmatrix}
w_1 &&&\\
&w_2 &&\\
 &&\ddots&\\
&&&w_{M}\end{pmatrix}.
\end{equation}
With these definitions, the system \eqref{least2} can be rewritten compactly as
\begin{equation}
\label{normal-eq}
(\boldsymbol{D}^\top\boldsymbol{W}\boldsymbol{D})\boldsymbol{\alpha}= \boldsymbol{D}^\top \boldsymbol{W} \boldsymbol{Y}.
\end{equation}
Under the condition $\sum_{j} w_{j} D_{j} = 0$, the weighted least squares problem \eqref{normal-eq} admits a unique solution.
In actual computation, we adopt a \emph{matrix-free} strategy: iterative Krylov solvers such as LSQR or preconditioned conjugate gradient (PCG) are applied, where only matrix--vector products with $\boldsymbol{D}$ and $\boldsymbol{D}^\top$ are required.  
This approach reduces the cost to $\mathcal{O}(M d)$ per time step and avoids storing $\boldsymbol{D}$ or explicitly forming $\boldsymbol{D}^\top\boldsymbol{W}\boldsymbol{D}$.
Specifically, for any given vector $\bs{\alpha}=(\alpha,\alpha_{\bx})^\top\in\mathbb{R}^{d+1}$, 
the matrix--vector products in the left side of \eqref{normal-eq} are computed in two steps as follows
\begin{description}
  \item[Step 1] {Forward product with weights ($\boldsymbol{W}\boldsymbol{D}\bs{\alpha}$)}: for each $j=1,\dots,M$,
$$ \bs{\beta}_j:= (\boldsymbol{W}\boldsymbol{D}\bs{\alpha})_j = w_j\big(\alpha + D_j^\top \alpha_{\bx}\big).$$

  \item[Step 2] {Transpose product ($\boldsymbol{D}^\top \bs{\beta}$)}: for $\bs{\beta}\in\mathbb{R}^{M}$,
 $$(\boldsymbol{D}^\top \bs{\beta})_0 = \sum_{j=1}^{M} \bs{\beta}_j, 
  \quad  (\boldsymbol{D}^\top \bs{\beta})_{1:d} = \sum_{j=1}^{M} \bs{\beta}_j D_j.$$
\end{description}

This matrix-free scheme for \eqref{normal-eq} achieves linear complexity in both $M$ and $d$ per time step and is thus particularly suitable for very high-dimensional problems.
The vector $\boldsymbol{\alpha}$ is of interest only through its last $d$ components, which correspond to the spatial gradient $\nabla u(t_k,\X_k^{m})$. The term $\Z^{m}_k$ is then computed as $\Z^{m}_k = \sigma^\top(t_k, \X_k^{m}) \nabla u(t_k, \X_k^{m}),$ 
whereas the first component of $\boldsymbol{\alpha}$, denoted by $\alpha$, is irrelevant to this computation and is therefore discarded.

\begin{rem}{
In high-dimensional settings, it often occurs that the number of particles $M\ll d$, which renders the normal equations underdetermined or severely ill-conditioned. To address this issue in practical computations, we adopt ridge regression (also known as Tikhonov regularization). Specifically, instead of solving the weighted least-squares problem in its original form, we minimize the penalized functional
$$
J_\lambda(\boldsymbol{\alpha}) = \sum_j w_j \big(\Y_{k+1}^j - \alpha - \alpha_{\bx}^\top D_j \big)^2 + \lambda \|\boldsymbol{\alpha}\|^2, 
\quad \lambda > 0,
$$
which leads to the regularized solution
$$
\boldsymbol{\alpha} = (\boldsymbol{D}^\top W \boldsymbol{D} + \lambda I)^{-1}\boldsymbol{D}^\top \boldsymbol{W}   \boldsymbol{Y}.
$$
The additional penalty term $\lambda \|\boldsymbol{\alpha}\|^2$ guarantees the invertibility of the system matrix and improves numerical stability, while only introducing a mild bias. This regularization is particularly effective when $M$ is small relative to $d$, as it balances variance reduction and stability in the estimation of $\Z_k^m$.}
\end{rem}

\begin{rem}  {
Despite the concentration of Euclidean distances as the dimension increases, the LLR step in the proposed method remains efficient, owing to a kernel-based prioritization by relative distance. For the Gaussian kernel $K(u)=e^{-u^2}$,
$$
\frac{w_j}{w_i}=\exp\!\Big(-\frac{\|D_j\|^2-\|D_i\|^2}{\varepsilon_k^2}\Big).
$$
Even if the absolute distances $\|D_j\|$ concentrate, the relative gap $\left|\|D_j\| - \|D_i\|\right|$ still provides discriminative weights that favor nearer neighbors. In addition, since the number of particles $M$ is typically small, one can readily identify enough neighbors in each local region. Consequently, through these complementary mechanisms, the FBSDE–LLR framework substantially improves the reliability of neighborhood selection and effectively overcomes the inherent limitations of classical LLR methods.}
   \end{rem}

\subsection{Computation of $\{\Y_k^m\}_{m=1}^M$ via Newton iteration}
With both $\X^{m}_k$ and $\Z^{m}_k$ specified, the nonlinear system \eqref{discreteYkm} reduces to a one-dimensional equation in $Y^{m}_k$, which is subsequently solved in the backward update
\begin{equation*}
\Y^{m}_k = \frac{1}{M}   \sum_{j}\Y^{j}_{k+1}+  f\bigl(t_k,\X^{m}_{k},\Y^{m}_k,\Z^{m}_k\bigr) \Delta t, \quad  1\leq m\leq M.
\end{equation*}
To this end, we define the following nonlinear function of $\Y^{m}_k$:
\begin{equation}
\label{nonlinearf}
F(\Y^{m}_{k})=\Y^{m}_{k} -\frac{1}{M}   \sum_{j=1}^M\Y^{j}_{k+1}+f\bigl(t_k,\X^{m}_{k},\Y^{m}_k,\Z^{m}_k\bigr)\Delta t,
\end{equation}
such that the desired solution $\Y^{m}_k$ corresponds to a root of $F$.
To solve the nonlinear equation \eqref{nonlinearf}, one may employ various numerical solvers. In this work, we adopt the Newton iteration method, which iteratively updates the solution via
\begin{equation}
\label{NRiter}
 \Y^{m,(n+1)}_k= \Y^{m,(n)}_k- \frac{F\bigl(\Y^{m,(n)}_k\bigr)}{F^\prime\bigl(\Y^{m,(n)}_k\bigr)},  \quad    n=0,1,2,\cdots, \quad 1\leq m\leq M,
\end{equation}
where $F^\prime$ denotes the derivative of $F$ with respect to $\Y^{m}_k$. For clarity, we summarize the complete algorithm as follows.

 \begin{algorithm}
\caption{FBSDE Solver with Local Linear Regression method for \eqref{mainprob}.}  \label{alg:Framwork}
\begin{algorithmic}[1]
        \REQUIRE  $T$: terminal time; $d$: spatial dimension; $M$: particle count; $N$: time step count; $\Delta t$: time step size; $\bx$: target point     \\
\FOR{$j=1:M$({\bf in parallel})}
\STATE Set the terminal condition $Y_{N}^{j} = g(X_{N}^{j})$;
\ENDFOR

\FOR{$k=1:N$}
\FOR{$j=1:M$({\bf Forward in parallel})}
\STATE {Simulate the trajectories of particles} $\X_{k}^{j}$ by \eqref{Xk};
\ENDFOR
\ENDFOR

\FOR{$k=N-1:0$}
\FOR{$m=1:M$({\bf Backward in parallel})} 
\STATE \hspace{3pt} Compute $\boldsymbol{\alpha}=(\alpha,\alpha_{\bx})^\top$ by a matrix-free solver applied to \eqref{normal-eq}.
\STATE \hspace{3pt} Compute $\nabla u \leftarrow \alpha_{\bx}$ and $\Z_k \leftarrow \sigma^\top \nabla u$; 
\STATE \hspace{3pt} Compute $\Y^m_k$ by using the Newton method \eqref{NRiter};  
\ENDFOR
\ENDFOR
 
\STATE Calculate the estimated value of $\Y_{0} = \frac1M\sum_{m=1}^{M}\Y_{0}^{m}$.
  \ENSURE The estimated value of the initial value $u(0,\bx)$;
\end{algorithmic}
\end{algorithm}

\section{Error estimates}\label{erranalysis}

In this section, we first analyze the various sources of error in the computation process and introduce several auxiliary lemmas that will be used in the final error analysis. These include the time discretization error of stochastic differential equations, stochastic matrix estimates associated with linear regression along random paths, and truncation errors from stochastic expansions. Finally, we present a rigorous error analysis tailored to the proposed algorithm.

We recall a classical result on the strong error of the Euler–Maruyama scheme \eqref{Xk} for $\X_k$ (see \cite[Theorem 10.2.2]{KloedenPlaten1992}), which determines its convergence order and forms the basis of our analysis. 
\begin{lem} {\rm \bf(Strong convergence of forward SDE for $\X_k$)}
Let $X_t$ be the solution of \eqref{dXt}, where the coefficients $\mu$ and $\sigma$ satisfy global Lipschitz continuity and linear growth conditions
\begin{equation*}
\begin{split}
 &\|\mu(t,\bx)-\mu(t,\bx')\| \leq L\|\bx-\bx'\|,     \|\sigma(t,\bx)-\sigma(t,\bx')\| \leq L\|\bx-\bx'\|,\\
&\|\mu(t,\bx)\| \leq K(1+\|\bx\|),\hspace{47pt} \|\sigma(t,\bx)\| \leq K(1+\|\bx\|).
\end{split}
\end{equation*}
The continuous-time Euler--Maruyama approximation is then defined for $t\in[t_k,t_{k+1})$ by
$$\widetilde{X}_t = X_{t_k} +  \mu(t_k,X_{t_k}) (t-t_k)
+ \sigma(t_k,X_{t_k})(W(t) - W(t_k)),$$
Clearly, for $t=t_k$ this reduces to the standard Euler--Maruyama scheme.  
Moreover, the scheme is known to achieve strong convergence of order $1/2$, in the sense that
\begin{equation}\label{Xbound}\max_{0\le t\le T} \E \left[\|X_{t}-\widetilde{X}_t\|^2\right] \le C \Delta t,\qquad\E\Big[ \sup_{0\le t\le T} \|X_t - \widetilde{X}_t\|^2 \Big] \le C \Delta t,\end{equation}
where $C>0$ is a constant depending only on $L,K,T$ and the initial location $X_0=\bx$.
\end{lem}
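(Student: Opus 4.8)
The plan is to follow the classical strong-error argument: form the error process, use the integral representations of $X_t$ and $\X_t$, and close a Gronwall inequality after controlling the local increments of the Euler path. Writing $\eta(s):=t_k$ for $s\in[t_k,t_{k+1})$ for the left-endpoint step function, the continuous Euler--Maruyama interpolation satisfies
\begin{equation*}
\X_t = \bx + \int_0^t \mu\bigl(\eta(s),\X_{\eta(s)}\bigr)\,\d s + \int_0^t \sigma\bigl(\eta(s),\X_{\eta(s)}\bigr)\,\d W_s,
\end{equation*}
while $X_t$ admits the analogous representation with the coefficients evaluated at $(s,X_s)$. Setting $e_t:=X_t-\X_t$ and subtracting, I would split each integrand by adding and subtracting $\mu(s,\X_s)$ (respectively $\sigma(s,\X_s)$), so that the drift discrepancy decomposes into a spatial-Lipschitz part controlled by $\|e_s\|$ and a consistency part measuring $\mu(s,\X_s)-\mu(\eta(s),\X_{\eta(s)})$, and likewise for $\sigma$.

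The first preliminary step is a uniform moment bound $\sup_{0\le t\le T}\E\|\X_t\|^2<\infty$, obtained from the linear-growth hypothesis via a discrete Gronwall argument. This delivers the key one-step increment estimate $\E\|\X_s-\X_{\eta(s)}\|^2 \le C\,\Delta t$, since $\X_s-\X_{\eta(s)}=\mu(\eta(s),\X_{\eta(s)})(s-\eta(s))+\sigma(\eta(s),\X_{\eta(s)})(W_s-W_{\eta(s)})$ has second moment of order $(\Delta t)^2+\Delta t$. Then, computing $\E\|e_t\|^2$, I would bound the drift integral by Cauchy--Schwarz and the stochastic integral by the It\^o isometry, and combine the spatial Lipschitz constant $L$ with the increment estimate (and the time-regularity of $\mu,\sigma$) to reach
\begin{equation*}
\E\|e_t\|^2 \le C_1\int_0^t \E\|e_s\|^2\,\d s + C_2\,\Delta t.
\end{equation*}
Gronwall's inequality then yields $\E\|e_t\|^2\le C_2\,\Delta t\, e^{C_1 T}$ uniformly in $t\in[0,T]$, which is the first bound in \eqref{Xbound}.

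For the pathwise-supremum estimate I would repeat the decomposition but replace the It\^o isometry by the Burkholder--Davis--Gundy (or Doob maximal) inequality, controlling $\E\bigl[\sup_{t\le T}\|\int_0^t(\sigma(s,X_s)-\sigma(\eta(s),\X_{\eta(s)}))\,\d W_s\|^2\bigr]$ by $C\int_0^T\E\|\sigma(s,X_s)-\sigma(\eta(s),\X_{\eta(s)})\|^2\,\d s$, and bounding the drift supremum by $T\int_0^T(\cdots)\,\d s$ via Cauchy--Schwarz. Feeding in the $L^2$ bound on $e_s$ already established above, rather than re-closing a Gronwall inequality with the supremum on both sides, avoids circularity and gives $\E[\sup_{t\le T}\|e_t\|^2]\le C\,\Delta t$.

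I expect the crux to be the one-step increment estimate $\E\|\X_s-\X_{\eta(s)}\|^2=O(\Delta t)$, which rests on the uniform second-moment bound for the Euler iterates. The other subtlety is the consistency term $\mu(s,\X_s)-\mu(\eta(s),\X_{\eta(s)})$ (and its $\sigma$ analogue): its control requires a modulus-of-continuity-in-time condition on the coefficients (e.g.\ $1/2$-H\"older in $t$) that is implicit in the classical statement but not listed among the spatial hypotheses stated above. Once these ingredients are in place, the remaining manipulations are standard.
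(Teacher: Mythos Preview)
Your proposal is the standard textbook argument and is essentially correct. However, the paper does not give a proof of this lemma at all: it is simply quoted as a classical result, with an explicit reference to \cite[Theorem~10.2.2]{KloedenPlaten1992}. So there is no ``paper's own proof'' to compare against; what you have sketched is precisely the kind of argument one finds in that reference.

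One remark on your closing caveat: you are right that controlling the consistency term $\mu(s,\X_s)-\mu(\eta(s),\X_{\eta(s)})$ requires some time regularity of the coefficients (typically $1/2$-H\"older in $t$), and this hypothesis is indeed absent from the lemma as stated here. The cited Kloeden--Platen theorem includes such a condition, so the gap is in the statement rather than in your argument.
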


The following discrete Gronwall Lemma can be found in
\cite{Brunner2004}.
\begin{lem}
\label{le3.2} Assume that $\{k_j\}~(j\geq 0)$ is a given
non-negative sequence, and the sequence $\{\varepsilon_n\}$
satisfies $\varepsilon_0\leq\rho_0$ and
\begin{equation}\label{eq:3.2}
\varepsilon_n\leq
\rho_0+\sum^{n-1}_{j=0}q_j+\sum^{n-1}_{j=0}k_j\varepsilon_j,\quad
n\geq 1,
\end{equation}
with $\rho_0\geq0$, $q_j\geq 0~(j\geq 0)$. Then
\begin{equation}\label{eq:3.3}
\varepsilon_n\leq
\big(\rho_0+\sum^{n-1}_{j=0}q_j\big)\exp(\sum^{n-1}_{j=0}k_j),\quad
n\geq 1.
\end{equation}
\end{lem}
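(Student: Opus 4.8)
The plan is to reduce the claimed exponential bound to a finite-product bound and then linearize via the elementary inequality $1+x\le e^x$. Write $A_n:=\rho_0+\sum_{j=0}^{n-1}q_j$ for the accumulated forcing and $P_n:=\prod_{j=0}^{n-1}(1+k_j)$ for the accumulated amplification, with the conventions $A_0=\rho_0$ and $P_0=1$ (empty sum and empty product). The key structural observation I would exploit is that, because $q_j\ge 0$, the sequence $\{A_n\}$ is non-decreasing, so $A_j\le A_n$ for every $j\le n$; this monotonicity is what allows the forcing term to be pulled out of the convolution-type sum in the induction.

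First I would prove, by strong induction on $n$, the intermediate estimate $\varepsilon_n\le A_n P_n$. The base case $n=0$ is immediate from $\varepsilon_0\le\rho_0=A_0 P_0$. For the inductive step, assume $\varepsilon_j\le A_j P_j$ for all $j<n$ and substitute into the hypothesis \eqref{eq:3.2}:
\[
\varepsilon_n\le A_n+\sum_{j=0}^{n-1}k_j\varepsilon_j\le A_n+\sum_{j=0}^{n-1}k_j A_j P_j\le A_n\Big(1+\sum_{j=0}^{n-1}k_j P_j\Big),
\]
where the last inequality uses $A_j\le A_n$. The crucial identity is then $1+\sum_{j=0}^{n-1}k_j P_j=P_n$, which follows by telescoping the one-step relation $P_{j+1}-P_j=k_j P_j$ from $P_0=1$. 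This closes the induction and yields $\varepsilon_n\le A_n P_n$.

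It then remains only to linearize the product: since $k_j\ge 0$, each factor satisfies $1+k_j\le e^{k_j}$, so $P_n=\prod_{j=0}^{n-1}(1+k_j)\le\exp\big(\sum_{j=0}^{n-1}k_j\big)$, and combining with $\varepsilon_n\le A_n P_n$ gives exactly \eqref{eq:3.3}. I do not anticipate any genuine obstacle here, since the result is classical; the step that requires the most care is the monotonicity argument, as the replacement $A_j\le A_n$ inside the summation is precisely where the hypothesis $q_j\ge 0$ is used, and without it the clean product bound would fail. The telescoping identity for $P_n$ is the other point worth stating explicitly, since it is what converts the recursive inequality into the closed-form amplification factor.
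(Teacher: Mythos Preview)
Your argument is correct: the strong induction on the product bound $\varepsilon_n\le A_nP_n$, with the telescoping identity $1+\sum_{j=0}^{n-1}k_jP_j=P_n$ and the monotonicity $A_j\le A_n$ (which is exactly where $q_j\ge0$ enters), goes through cleanly, and the final linearization $1+k_j\le e^{k_j}$ gives \eqref{eq:3.3}. Note, however, that the paper does not supply its own proof of this lemma at all; it simply quotes the result and cites Brunner~\cite{Brunner2004}, so there is no in-paper argument to compare against---your proof is the standard one and would be perfectly appropriate here.
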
 

We now analyze the time discretization error of the semi-discrete Euler Maruyama scheme for the Martingale formulation of backward SDE associated with $\Y_k$ in \eqref{Ytkdiscrete}.
\begin{lem}{\rm \bf (Discretization error for Euler scheme \eqref{Ytkdiscrete})}
\label{lem:DEES} 
If $f \in C^{1,2}$ and satisfies the Lipschitz condition \eqref{Lipcond}, then the local truncation error of semi-discrete backward scheme \eqref{Ytkdiscrete} is bounded by
\begin{equation}\label{c}
|\mathcal{E}_{k}| := \Big|\E_k \left[ \int_{t_k}^{t_{k+1}} f(s,X_s,Y_s,Z_s) \d s \right] - f(t_k,X_k,Y_k,Z_k)  \Delta t \Big| \leq C (\Delta t)^2,
\end{equation}
where $C$ is a positive constant independent of $\Delta t$.
\end{lem}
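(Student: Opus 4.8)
The plan is to reduce the local truncation error to a time-regularity estimate for a single scalar function of $s$. Since $f(t_k,X_k,Y_k,Z_k)$ is $\mathcal{F}_{t_k}$-measurable, I would first absorb it into the conditional expectation and write
\[
\mathcal{E}_k = \E_k\!\left[\int_{t_k}^{t_{k+1}}\bigl(f(s,X_s,Y_s,Z_s)-f(t_k,X_k,Y_k,Z_k)\bigr)\,\d s\right].
\]
Using the integrability guaranteed by Lemma~\ref{Lipcond} together with the linear growth of $f$, Fubini's theorem lets me interchange $\E_k$ and $\int\d s$. Setting $\phi(s):=\E_k[f(s,X_s,Y_s,Z_s)]$, this gives $|\mathcal{E}_k|\le\int_{t_k}^{t_{k+1}}|\phi(s)-\phi(t_k)|\,\d s$, so the whole estimate reduces to showing that $\phi$ is Lipschitz in time, $|\phi(s)-\phi(t_k)|\le C\,(s-t_k)$, with $C$ independent of $\Delta t$; integrating this over $[t_k,t_{k+1}]$ produces the factor $\int_{t_k}^{t_{k+1}}(s-t_k)\,\d s=\tfrac12(\Delta t)^2$ and hence the claim.

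To obtain the time-Lipschitz bound I would exploit the FBSDE/PDE structure. Using $Y_s=u(s,X_s)$ and $Z_s=\sigma^\top\nabla u(s,X_s)$, the integrand collapses to a deterministic function of $(s,X_s)$, namely $\Phi(s,\bx):=f\bigl(s,\bx,u(s,\bx),\sigma^\top(s,\bx)\nabla u(s,\bx)\bigr)$. Granting that $\Phi\in C^{1,2}$ with at most polynomially growing derivatives, Itô's formula on $[t_k,s]$ yields
\[
\Phi(s,X_s)=\Phi(t_k,X_k)+\int_{t_k}^{s}(\partial_t+\mathcal{L})\Phi(r,X_r)\,\d r+\int_{t_k}^{s}\bigl(\sigma^\top\nabla\Phi\bigr)(r,X_r)\cdot\d W_r .
\]
Taking $\E_k$ annihilates the stochastic integral, which is a genuine martingale under the growth bounds, leaving $\phi(s)-\phi(t_k)=\E_k\!\bigl[\int_{t_k}^{s}(\partial_t+\mathcal{L})\Phi(r,X_r)\,\d r\bigr]$. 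Standard moment estimates for the SDE \eqref{dXt}, of the form $\sup_{r\le T}\E_k[\|X_r\|^p]\le C(1+\|X_k\|^p)$, then bound $(\partial_t+\mathcal{L})\Phi(r,X_r)$ uniformly in conditional mean, giving $|\phi(s)-\phi(t_k)|\le C(s-t_k)$ and closing the argument.

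The conceptual heart is that the naive $O(\sqrt{\Delta t})$ fluctuations of $X_s,Y_s,Z_s$ enter only through martingale (Itô) increments, whose vanishing conditional mean upgrades the crude $O((\Delta t)^{3/2})$ Lipschitz estimate to the sharp $O((\Delta t)^2)$; this cancellation is exactly why the stated order is attainable, and it is the step I would emphasize.

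The main obstacle I anticipate is justifying that $\Phi$ is $C^{1,2}$ with controlled growth, since $\Phi$ already contains $\nabla u$, so $\mathcal{L}\Phi$ implicitly involves derivatives of $u$ up to third order; this regularity is not supplied by the bare Lipschitz hypotheses. I would resolve it by invoking interior parabolic (Schauder) regularity for $u$, which is available because the uniform nondegeneracy of $\sigma\sigma^\top$ in Lemma~\ref{Lipcond} makes \eqref{mainprob} uniformly parabolic, together with sufficient smoothness of the coefficients. As an alternative that trades this regularity burden for control of the backward dynamics, one may instead apply Itô expansions separately to each argument $X_s$, $Y_s$, and $Z_s$ using the FBSDE \eqref{FBsyst}, at the cost of invoking Malliavin-type regularity of the $Z$-process; either route delivers the same cancellation and the same bound.
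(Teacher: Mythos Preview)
Your approach is essentially identical to the paper's: both collapse $f(s,X_s,Y_s,Z_s)$ to a function $\Phi(s,X_s)$ via the Feynman--Kac identifications, apply It\^o's formula, take $\E_k$ to kill the martingale part, and integrate the resulting time-Lipschitz bound for $s\mapsto\E_k[\Phi(s,X_s)]$ to obtain $\tfrac12 C(\Delta t)^2$. If anything, you are more careful than the paper---you invoke moment bounds on $X$ with polynomial growth of $(\partial_t+\mathcal L)\Phi$ and flag the third-order regularity of $u$ hidden in $\mathcal L\Phi$, whereas the paper simply assumes $\sup_{(t,\bx)}|(\partial_t+\mathcal L)F(t,\bx)|<\infty$ without further comment.
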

 
\begin{proof}
For ease of notation, set $F(t,\bx):=f\bigl(t,\bx,\,u(t,\bx),\,(\nabla_{\!x}u)(t,\bx)\,\sigma(t,\bx)\bigr)$ so that the discretization error \eqref{c} satisfies
\begin{equation*}\begin{split}
\mathcal{E}_{k}&= \E_k\!\left[\int_{t_k}^{t_{k+1}} f\bigl(s,X_s,Y_s,Z_s\bigr)\,\mathrm{d}s\right]
- f\bigl(t_k,X_k,Y_k,Z_k\bigr)\,\Delta t
\\&= \int_{t_k}^{t_{k+1}}\!\Bigl(\E_k\big[F(s,X_s)\big]-F(t_k,X_k)\Bigr)\,\mathrm{d}s.
\end{split}\end{equation*}
which implies
\begin{equation}\label{eq:Ek-abs}
|\mathcal{E}_{k}|  \le  \int_{t_k}^{t_{k+1}} \bigl|\E_k[F(s,X_s)]-F(t_k,X_k)\bigr| {\rm d} s.
\end{equation}
Applying Itô’s formula to $F(t,\bx)$ yields
\begin{equation}\label{eq:ItoF}
F(t,X_t)  =  F(t_k,X_k) + \int_{t_k}^{t} (\partial_t+\mathcal L)F(s,X_s)\,\d s
+ \int_{t_k}^{t} \nabla_x F(s,X_s) \sigma(s,X_s)\,\d W_s,
\end{equation}
where the generator $\mathcal L$ is the one defined in \eqref{mainprob}.
Taking conditional expectation and differentiating in $t$ yields
\begin{equation}\label{eq:cond-time-deriv}
\frac{\d}{\d t} \E_k \big[F(t,X_t)\big]  =  \E_k \big[(\partial_t+\mathcal L)F(t,X_t)\big],\qquad t\in[t_k,t_{k+1}].
\end{equation}
Therefore,
\[
\sup_{t\in[t_k,t_{k+1}]}\left|\frac{\d}{\d t} \E_k[F(t,X_t)]\right|
 \le  M  :=  \sup_{(t,\bx)\in[0,T]\times\mathbb{R}^d} \big|(\partial_t+\mathcal L)F(t,\bx)\big|.
\]
By the mean value theorem, for $t\in[t_k,t_{k+1}]$,
\[
\bigl|\E_k[F(t,X_t)]-F(t_k,X_k)\bigr|
 \le  \sup_{s\in[t_k,t_{k+1}]}\left|\tfrac{d}{ds} \E_k[F(s,X_s)]\right| (t-t_k)
 \le  M (t-t_k).
\]
Inserting this bound into \eqref{eq:Ek-abs} leads to
\[
|\mathcal{E}_k|  \le  \int_{t_k}^{t_{k+1}} M (t-t_k) dt
 =  \tfrac{1}{2} M (\Delta t)^2
 \le  C (\Delta t)^2,
\]
which establishes the claimed estimate.
\end{proof}

To ensure the stability of LLR estimator, it is crucial to establish nondegeneracy conditions for the weighted design matrix. The following two lemmas provide moment bounds and a spectral lower bound for the associated population covariance matrix.
\begin{lem}\label{pm}
Let $D_j := X_k^{j}-\bx\in\mathbb{R}^d$ and define radial weights
$w_j   :=   K \big(\|D_j\|/\varepsilon_k\big),$
where $K:[0,\infty)\to[0,\infty)$ is Lipschitz, compactly supported on $[0,1]$, and there exist constants $0<\rho\le 1$ and $K_{\min}>0$ such that $K(r)\ge K_{\min}$ for all $r\in[0,\rho]$; moreover $K(r)\le K_{\max}$ for all $r\ge 0$. 
Assume the sampling density $p$ on $\mathbb{B}_{\varepsilon_k}(\bx)$ is bounded and positive: $0<p_0 \le p(\xi)\le p_1<\infty,   \forall\xi\in\mathbb{B}_{\varepsilon_k}(\bx).$ 
Define the population moments
\begin{equation}\label{notation_xi}
\xi_0:=   \E[w_j],\quad \xi_1   :=   \E[w_j D_j],\quad \Sigma   :=   \E[w_j   D_jD_j^\top],
\end{equation}
where the expectation is taken with respect to the conditional law of $D_j$, whose density is proportional to $p(\bx+\xi) \mathbf{1}_{\{\|\xi\|\le \varepsilon_k\}}$ restricted to $\mathbb{B}_{\varepsilon_k}(0)$. 
If, in addition, the sampling is \emph{angularly symmetric} around $\bx$ (i.e., conditional on $\|D_j\|=r$, the direction $D_j/\|D_j\|$ is uniformly distributed on the unit sphere), then the following bounds hold:
\begin{equation}
\label{eq:xi0-bounds}
p_0K_{\min}   \mathrm{vol}(\mathbb{B}_{\rho\varepsilon_k}) \le \xi_0 \le p_1K_{\max}   \mathrm{vol}(\mathbb{B}_{\varepsilon_k}),\quad \xi_1   =   0, 
\end{equation}
and
\begin{equation}
\label{eq:sigma-lb}
\lambda_{\min}(\Sigma)  \ge   C_\Sigma   \varepsilon_k^{d+2}, \quad 
C_\Sigma:=\tfrac{\pi^{d/2}}{(d+2)\Gamma(d/2+1)}p_0K_{\min} \rho^{d+2}. 
\end{equation}
Here $\lambda_{\min}(\Sigma)$ denotes the \emph{smallest eigenvalue} of the symmetric positive semidefinite matrix $\Sigma$, and the volume of a $d$-dimensional ball of radius $r$ is $\mathrm{vol}(\mathbb{B}_r) = \omega_d r^d/d$, with  $\omega_d=\frac{2\pi^{d/2}}{\Gamma(d/2)}$.
\end{lem}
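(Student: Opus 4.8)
The plan is to write each of the three population moments as an integral of the radial weight against the sampling density over the local ball, using that $K$ is supported on $[0,1]$ so that $w_j=K(\|D_j\|/\varepsilon_k)$ vanishes once $\|D_j\|>\varepsilon_k$. Denoting by $\xi$ the realized value of $D_j$, I would record
$$\xi_0=\int_{\mathbb{B}_{\varepsilon_k}(0)}K(\|\xi\|/\varepsilon_k)\,p(\bx+\xi)\,\d\xi,\qquad \Sigma=\int_{\mathbb{B}_{\varepsilon_k}(0)}K(\|\xi\|/\varepsilon_k)\,\xi\xi^\top\,p(\bx+\xi)\,\d\xi,$$
and analogously for $\xi_1$. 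The two-sided bound on $\xi_0$ is then a direct monotone sandwich: on $\mathbb{B}_{\rho\varepsilon_k}$ one has $\|\xi\|/\varepsilon_k\le\rho$, hence $K\ge K_{\min}$ and $p\ge p_0$, and discarding the nonnegative mass outside $\mathbb{B}_{\rho\varepsilon_k}$ yields the lower bound $p_0K_{\min}\,\mathrm{vol}(\mathbb{B}_{\rho\varepsilon_k})$; globally $K\le K_{\max}$ and $p\le p_1$ on the support $\mathbb{B}_{\varepsilon_k}$ give the upper bound $p_1K_{\max}\,\mathrm{vol}(\mathbb{B}_{\varepsilon_k})$.

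For the first moment $\xi_1=0$ I would condition on the radius $r=\|D_j\|$ and invoke angular symmetry. Since $w_j=K(r/\varepsilon_k)$ depends on $r$ alone and, given $r$, the direction $U:=D_j/\|D_j\|$ is uniform on $S^{d-1}$, the tower property gives $\xi_1=\E\big[K(r/\varepsilon_k)\,r\,\E[U\mid r]\big]=0$, because $\E[U\mid r]=0$ by rotational invariance of the uniform law on the sphere.

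The spectral estimate for $\Sigma$ is the crux. I would bound the smallest eigenvalue through the Rayleigh quotient: for an arbitrary unit vector $v$,
$$v^\top\Sigma v=\E\big[w_j(v^\top D_j)^2\big]=\E\big[K(r/\varepsilon_k)\,r^2\,\E[(v^\top U)^2\mid r]\big].$$
The key observation is that angular symmetry makes this quantity independent of $v$: for $U$ uniform on $S^{d-1}$ one has $\E[UU^\top]=\tfrac1d I$, so $\E[(v^\top U)^2\mid r]=1/d$ for every unit $v$. Consequently $v^\top\Sigma v=\tfrac1d\,\E[w_j\|D_j\|^2]$ for all unit $v$, i.e.\ $\Sigma$ is a scalar multiple of the identity and $\lambda_{\min}(\Sigma)=\tfrac1d\,\E[w_j\|D_j\|^2]$. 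This collapses the second-moment tensor to a single radial integral.

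It then remains to lower-bound the scalar $\E[w_j\|D_j\|^2]=\int_{\mathbb{B}_{\varepsilon_k}}K(\|\xi\|/\varepsilon_k)\|\xi\|^2p(\bx+\xi)\,\d\xi$. Restricting to $\mathbb{B}_{\rho\varepsilon_k}$ and using $K\ge K_{\min}$, $p\ge p_0$ there, I would pass to polar coordinates to get $\int_{\mathbb{B}_{\rho\varepsilon_k}}\|\xi\|^2\,\d\xi=\omega_d\int_0^{\rho\varepsilon_k}r^{d+1}\,\d r=\tfrac{\omega_d}{d+2}(\rho\varepsilon_k)^{d+2}$, and divide by $d$. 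The closing step is purely arithmetic: from $\omega_d=2\pi^{d/2}/\Gamma(d/2)$ and $\Gamma(d/2+1)=\tfrac d2\Gamma(d/2)$ one obtains $\tfrac{\omega_d}{d(d+2)}=\tfrac{\pi^{d/2}}{(d+2)\Gamma(d/2+1)}$, giving $\lambda_{\min}(\Sigma)\ge C_\Sigma\varepsilon_k^{d+2}$ with exactly the stated constant. I expect the only delicate points to be the correct use of angular symmetry to reduce the tensor $\Sigma$ to a scalar (so that $\lambda_{\min}$ is computable without diagonalization) and the bookkeeping that reconciles $\omega_d$ with the Gamma-function form of $C_\Sigma$; everything else is monotone sandwiching and one standard polar integral.
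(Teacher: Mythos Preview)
Your proposal is correct and follows essentially the same route as the paper: both arguments write the moments as integrals over $\mathbb{B}_{\varepsilon_k}$, sandwich $\xi_0$ using $K_{\min},K_{\max},p_0,p_1$, kill $\xi_1$ by rotational invariance, and for $\Sigma$ reduce the second-moment tensor to a radial integral computed in polar coordinates with the same Gamma-function bookkeeping. The only cosmetic difference is that you first invoke angular symmetry to identify $\Sigma$ as a scalar multiple of the identity and then restrict to $\mathbb{B}_{\rho\varepsilon_k}$, whereas the paper first passes to the Loewner lower bound $\Sigma\succeq p_0K_{\min}\int_{\mathbb{B}_{\rho\varepsilon_k}}DD^\top\,\d D$ and then uses isotropy of the Lebesgue integral over a centered ball; both lead to the identical constant $C_\Sigma$.
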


\begin{proof}
By the definition of $\xi_0$, setting $D := \xi - \bx$ gives
\begin{equation*}
\xi_0   =   \int_{\|D\|\le \varepsilon_k} K \Big(\frac{\|D\|}{\varepsilon_k}\Big) p(\bx+D) \d D.
\end{equation*}
For the lower bound in \eqref{eq:xi0-bounds}, we restrict to the region $\|D\|\le \rho\varepsilon_k$, where $K \ge K_{\min}$ and $p \ge p_0$, which yields
\begin{equation*}
\xi_0   \ge   p_0K_{\min}\int_{\|D\|\le \rho\varepsilon_k} \d D 
  =   p_0K_{\min}   \mathrm{vol}(\mathbb{B}_{\rho\varepsilon_k}).
\end{equation*}
For the upper bound, using $p \le p_1$ and $K \le K_{\max}$ on $\|D\|\le \varepsilon_k$ gives
\begin{equation*}
\xi_0   \le   p_1K_{\max}\int_{\|D\|\le \varepsilon_k}\d D 
  =   p_1K_{\max}   \mathrm{vol}(\mathbb{B}_{\varepsilon_k}).
\end{equation*}
Similarly, by the definition of $\xi_1$, we have
\begin{equation*}
\xi_1   =   \int_{\|D\|\le \varepsilon_k} K \Big(\frac{\|D\|}{\varepsilon_k}\Big) p(\bx+D)   D    \d D.
\end{equation*}
Under the \emph{angular symmetry} assumption (uniform directions conditional on radius), the angular integral of $D$ over any sphere $\{D:\|D\|=r\}$ is zero, while the weight $K(\|D\|/\varepsilon_k)$ depends only on $r$. Hence the integral vanishes and \eqref{eq:xi0-bounds} follows.

It remains to consider $\Sigma$, for which we have
\begin{equation*}
\Sigma   =   \int_{\|D\|\le \varepsilon_k} K \left(\frac{\|D\|}{\varepsilon_k}\right) p(x+D)   DD^\top    \d D 
  \succeq   p_0K_{\min} \int_{\|D\|\le \rho\varepsilon_k} DD^\top    \d D,
\end{equation*}
where $\succeq$ denotes the Loewner order on symmetric matrices. Exploiting isotropy of the integral, we obtain
\begin{equation*}
\begin{split}
\int_{\|D\|\le \rho\varepsilon_k} DD^\top   \d D &= \frac{1}{d} \Big(\int_{\|D\|\le \rho\varepsilon_k}  \|D\|^2  \d D\Big) I_d =  \frac{\omega_d}{d} \Big(\int_0^{\rho\varepsilon_k} r^2\cdot    r^{d-1}  \d r\Big) I_d\\
 &=  \frac{\omega_d}{d(d+2)} (\rho\varepsilon_k)^{d+2} I_d  =   \frac{\pi^{d/2}}{(d+2)   \Gamma(d/2+1)} (\rho\varepsilon_k)^{d+2} I_d,
\end{split}
\end{equation*}
where $\omega_d=\frac{2\pi^{d/2}}{\Gamma(d/2)}$ and  $I_{d}$ represents $d\times d$ identity matrix.
From the above two estimates we obtain 
$\Sigma \succeq C_{\Sigma} \varepsilon_k^{d+2} I_d$.
Therefore, by the Rayleigh--Ritz characterization, the smallest eigenvalue of the symmetric matrix $\Sigma$ satisfies
$
\lambda_{\min}(\Sigma) \ge C_\Sigma  \varepsilon_k^{d+2},
$
which proves the bound \eqref{eq:sigma-lb}.
\end{proof}

\begin{lem} 
Define $\eta_j:=w_j D_jD_j^\top\succeq 0$ and $\Gamma=\sum_{j=1}^M \eta_j$, so that $\mathbb E[\Gamma]=M\Sigma$. We introduce the event
\begin{equation}\label{defAk}
\mathcal A_k:=\Big\{\lambda_{\min}(\Gamma)\ge \frac12 C_{\mathcal{A}}   M   \varepsilon_k^{d+2}\Big\},  \quad  C_{\mathcal{A}} =\frac{\delta^2C_{\Sigma}}{2K_{max}}.
\end{equation}
There exist constants $C_{\mathcal{A}} >0$ such that
\begin{equation}\label{PAkc}
\mathbb P(\mathcal A_k^c)\ \le\ d   e^{-C_{\mathcal{A}} M\varepsilon_k^d}. 
\end{equation}
Moreover, on the event $\mathcal A_k$ one has
\begin{equation}\label{boundss}
\|\Gamma^{-1}\|\ \le\ \frac{2}{C_\Sigma M\varepsilon_k^{d+2}},\;\;\;
\|S\|\  = \| \sum_j w_j D_j \|  \le\ K_{\max}  M   \varepsilon_k^{d+1},\;\;\;
S_0=\sum_j w_j \asymp\ M   \varepsilon_k^d.
\end{equation}
where the constant $C_{\Sigma}$ is defined in \eqref{eq:sigma-lb}.
\end{lem}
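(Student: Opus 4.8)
The plan is to read the probability bound \eqref{PAkc} as a matrix-concentration statement and prove it with the lower-tail \emph{matrix Chernoff} inequality (in the form of Tropp). The matrix $\Gamma=\sum_{j=1}^M\eta_j$ is a sum of independent, positive semidefinite, rank-one matrices $\eta_j=w_jD_jD_j^\top$ (independence comes from the particles $X_k^{j}$ being independent Euler--Maruyama draws), whose common mean satisfies $\E[\Gamma]=M\Sigma$ and, by the spectral lower bound \eqref{eq:sigma-lb} of the preceding lemma, $\mu_{\min}:=\lambda_{\min}(\E[\Gamma])=M\lambda_{\min}(\Sigma)\ge C_\Sigma M\varepsilon_k^{d+2}$. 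I would therefore invoke the inequality $\mathbb{P}\{\lambda_{\min}(\Gamma)\le(1-\delta)\mu_{\min}\}\le d\,e^{-\delta^2\mu_{\min}/(2R)}$, valid for $\delta\in[0,1)$ whenever $\lambda_{\max}(\eta_j)\le R$ almost surely, the dimension prefactor $d$ being exactly the ambient matrix size that appears in \eqref{PAkc}.

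The two inputs this requires are the mean lower bound just quoted and a uniform almost-sure bound on each summand. For the latter, since $\eta_j$ has rank one its only nonzero eigenvalue is $w_j\|D_j\|^2$, and because $K$ is supported on $[0,1]$ the weight $w_j=K(\|D_j\|/\varepsilon_k)$ vanishes unless $\|D_j\|\le\varepsilon_k$, while $K\le K_{\max}$; hence $\lambda_{\max}(\eta_j)=w_j\|D_j\|^2\le K_{\max}\varepsilon_k^2=:R$. Substituting $R$ and $\mu_{\min}$ into the exponent gives $\delta^2\mu_{\min}/(2R)\ge \tfrac{\delta^2 C_\Sigma}{2K_{\max}}M\varepsilon_k^{d}=C_{\mathcal{A}}M\varepsilon_k^{d}$, which is precisely \eqref{PAkc}; taking $\delta=\tfrac12$ makes the threshold $(1-\delta)\mu_{\min}\ge\tfrac12 C_\Sigma M\varepsilon_k^{d+2}$ defining $\mathcal{A}_k$ consistent with the inverse-norm estimate below (so I read the constant in \eqref{defAk} as $C_\Sigma$ rather than $C_{\mathcal{A}}$).

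It remains to derive the three consequences in \eqref{boundss}. The inverse-norm bound is immediate: on $\mathcal{A}_k$ one has $\lambda_{\min}(\Gamma)\ge\tfrac12 C_\Sigma M\varepsilon_k^{d+2}$, so $\|\Gamma^{-1}\|=\lambda_{\min}(\Gamma)^{-1}\le 2/(C_\Sigma M\varepsilon_k^{d+2})$. For the other two I would again use the support of $K$: since $w_j\|D_j\|\le\varepsilon_k w_j$ pointwise, the triangle inequality yields the \emph{deterministic} reduction $\|S\|\le\sum_j w_j\|D_j\|\le\varepsilon_k\sum_j w_j=\varepsilon_k S_0$, so both $\|S\|$ and $S_0$ are governed once $S_0$ is. Because $S_0=\sum_j w_j$ is a sum of independent bounded scalars $w_j\in[0,K_{\max}]$ with mean $M\xi_0\asymp M\varepsilon_k^d$ by \eqref{eq:xi0-bounds}, a scalar (Chernoff--Bernstein) bound gives $S_0\asymp M\varepsilon_k^d$ off an event of probability at most $e^{-cM\varepsilon_k^d}$, whence $\|S\|\le\varepsilon_k S_0\lesssim K_{\max}M\varepsilon_k^{d+1}$.

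I expect the main obstacle to be bookkeeping rather than depth. The genuinely probabilistic core is the single application of matrix Chernoff; the delicate points are (i) checking that the rank-one, compact-support structure makes $R=K_{\max}\varepsilon_k^2$ the correct uniform bound, since any looser $R$ would destroy the clean exponent $C_{\mathcal{A}}M\varepsilon_k^d$, and (ii) recognizing that the estimates for $S_0$ and $\|S\|$ are not consequences of $\mathcal{A}_k$ alone but of a companion scalar-concentration event. To be rigorous I would enlarge the good event to the intersection of the matrix-Chernoff event with $\{S_0\asymp M\varepsilon_k^d\}$, both of which fail with probability at most $d\,e^{-cM\varepsilon_k^d}$, thereby preserving the form of \eqref{PAkc}.
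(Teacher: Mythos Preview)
Your proposal is correct and follows essentially the same route as the paper: both apply Tropp's lower-tail matrix Chernoff inequality with the uniform summand bound $R=K_{\max}\varepsilon_k^2$ and the mean lower bound $\mu_{\min}\ge C_\Sigma M\varepsilon_k^{d+2}$ from the preceding lemma, then take $\delta=\tfrac12$. You are in fact slightly more careful on the auxiliary estimates---the paper obtains $S_0\asymp M\varepsilon_k^d$ by a one-line appeal to the law of large numbers and writes the $\|S\|$ bound as if deterministic, whereas you correctly observe that these require a companion scalar concentration event to be intersected with $\mathcal{A}_k$.
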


\begin{proof}
We derive from \eqref{notation_xi} and \eqref{eq:sigma-lb} that
\begin{equation}\label{lambdagamma}
\lambda_{\min}(\mathbb E[\Gamma])=\lambda_{\min}(M\Sigma)\ \ge\ C_\Sigma M\varepsilon_k^{d+2}.
\end{equation}
Since $w_j\le K_{\max}$ and $\|D_j\|\le \varepsilon_k$, each summand $\eta_j=w_jD_jD_j^\top$ satisfies
$\lambda_{\max}(\eta_j)=\|\eta_j\| \le w_j\|D_j\|^2 \le K_{\max} \varepsilon_k^2.$ By the matrix Chernoff bound (cf.~\cite[Thm.~5.1]{Tropp2012}), for any $\delta\in(0,1)$,
\begin{equation*}
\mathbb{P} \Big\{\lambda_{\min}(\Gamma)\le (1-\delta) \lambda_{\min}(\E[\Gamma])\Big\}
\ \le\ d \exp \Big(-\frac{\delta^2}{2}\cdot\frac{\lambda_{\min}(\E[\Gamma])}{K_{\max}\varepsilon_k^2}\Big).
\end{equation*}
In particular,  inserting \eqref{lambdagamma} yields
\begin{equation*}
\mathbb{P} \Big\{\lambda_{\min}(\Gamma)\le (1-\delta) C_\Sigma M  \varepsilon_k^{d+2}\Big\}
\ \le\ d \exp \left(-\frac{\delta^2 C_\Sigma}{2K_{\max}} M\varepsilon_k^{d}\right)= d   \exp(- C_{\mathcal{A}}M\varepsilon_k^d),
\end{equation*}
With $\delta=\tfrac12$, we obtain 
$\mathbb P(\mathcal A_k^c)\le d e^{-C_{\mathcal{A}}M\varepsilon_k^d}$. On $\mathcal A_k$, the inverse bound 
$\lambda_{\min}(\Gamma)\ge \tfrac12 C_\Sigma M\varepsilon_k^{d+2}$ holds. Moreover,
\begin{equation*}
\|S\|=\Big\|\sum_j w_j D_j\Big\| \le \sum_j w_j\|D_j\| \ \le\ K_{\max} M \varepsilon_k^{d+1},
\end{equation*}
and the law of large numbers together with Lemma~\ref{pm} yields $S_0=\sum_j w_j\asymp M\varepsilon_k^d$.
\end{proof}

We proceed to a rigorous analysis of the error in the Taylor expansion \eqref{taylorexp}, where the first-order truncation plays a crucial role by directly linking the known solution $u(t_{k+1},\cdot)$ (approximated by $\{\Y^j_{k+1}\}_j$) with the gradient $\nabla u$, thereby enabling the particle-based LLR construction and yielding the gradient approximation $\alpha_{\bx}$.
\begin{lem} {\rm \bf(Taylor truncation error \eqref{taylorexp})} \label{TRB}
Let $\X_k^m=\bx$ in \eqref{taylorexp}, so that the next point can be written as $\bx+D_j$. 
Let $u(t_{k+1},\cdot)\in C^2(\mathbb B_{\varepsilon_k}(\bx))$ with
$\|\nabla^2 u(t_{k+1},\cdot)\|_\infty$  $\le C_{\nabla^2}$. For each $j$, introduce the Taylor remainder
\begin{equation}\label{defrj}
r_j=u(t_{k+1},\bx+D_j)-u(t_{k+1},\bx) -\partial_t u(t_k, \bx)   \Delta t- \nabla u(t_{k+1},\bx)^\top D_j,
\end{equation}
Then the following estimate holds
\begin{equation}\label{boundrj}
|r_j|\ \le\ \tfrac12   \|\nabla^2 u\|_\infty   \|D_j\|^2\ \le\ \tfrac12 C_{\nabla^2}\varepsilon_k^2.
\end{equation}
Moreover, we have
\begin{equation}\label{wrwrdbound}
\sum_{j=1}^M w_j|r_j|\ \le\ C   M   \varepsilon_k^{d+2},\qquad
\Big\|\sum_{j=1}^M w_j r_j D_j\Big\|\ \le\ C   M   \varepsilon_k^{d+3}.
\end{equation}
\end{lem}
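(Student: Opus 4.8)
The statement splits into the pointwise estimate \eqref{boundrj} and the two weighted-sum estimates \eqref{wrwrdbound}. For the pointwise bound the plan is to apply Taylor's theorem with second-order remainder to the spatial map $\by\mapsto u(t_{k+1},\by)$ at the anchor $\bx$. Since the kernel $K$ is supported on $[0,1]$, any $j$ with $w_j\neq 0$ forces $\|D_j\|\le\varepsilon_k$; hence $\bx+D_j$ and the whole segment $[\bx,\bx+D_j]$ lie in $\mathbb B_{\varepsilon_k}(\bx)$, where $u(t_{k+1},\cdot)\in C^2$. Writing $u(t_{k+1},\bx+D_j)=u(t_{k+1},\bx)+\nabla u(t_{k+1},\bx)^\top D_j+\tfrac12 D_j^\top\nabla^2 u(t_{k+1},\zeta_j)D_j$ for some $\zeta_j\in[\bx,\bx+D_j]$ isolates the quadratic remainder, and the operator-norm bound $\|\nabla^2 u(t_{k+1},\cdot)\|_\infty\le C_{\nabla^2}$ together with $\|D_j\|\le\varepsilon_k$ then yields $|r_j|\le\tfrac12 C_{\nabla^2}\|D_j\|^2\le\tfrac12 C_{\nabla^2}\varepsilon_k^2$, which is \eqref{boundrj}.

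The one delicate point is the time-increment term $\partial_t u(t_k,\bx)\Delta t$ carried in the remainder \eqref{defrj}: it is $O(\Delta t)$ and independent of $D_j$, so a purely spatial expansion cannot by itself reduce $r_j$ to the Hessian-only bound. I would reconcile this with a first-order time expansion $u(t_{k+1},\bx)=u(t_k,\bx)+\partial_t u(t_k,\bx)\Delta t+O(\Delta t^2)$, which lets the subtracted $\partial_t u(t_k,\bx)\Delta t$ recombine with $u(t_{k+1},\bx)$ and leaves, besides the quadratic spatial term, only an $O(\Delta t^2)$ contribution and a mixed $O(\|D_j\|\Delta t)$ contribution; under the parabolic scaling $\varepsilon_k^2\asymp\Delta t$ both are $o(\varepsilon_k^2)$ and are absorbed into the constant. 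Ensuring that the spatial Hessian, rather than the time increment, dictates the order is the step I expect to require the most care.

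For the aggregated bounds \eqref{wrwrdbound} the plan is to combine the uniform pointwise estimate with the weight total $S_0=\sum_j w_j\asymp M\varepsilon_k^d$ supplied by \eqref{boundss}. Since terms with $w_j=0$ drop out and the remaining ones satisfy both $|r_j|\le\tfrac12 C_{\nabla^2}\varepsilon_k^2$ and $\|D_j\|\le\varepsilon_k$, one obtains $\sum_j w_j|r_j|\le\tfrac12 C_{\nabla^2}\varepsilon_k^2\sum_j w_j\le C M\varepsilon_k^{d+2}$ and, via the triangle inequality $\|\sum_j w_j r_j D_j\|\le\sum_j w_j|r_j|\,\|D_j\|\le\tfrac12 C_{\nabla^2}\varepsilon_k^3\sum_j w_j\le C M\varepsilon_k^{d+3}$, exactly the claimed estimates. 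These inherit the high-probability qualifier of \eqref{boundss} (holding on the event $\mathcal A_k$), but require no probabilistic input beyond the relation $S_0\asymp M\varepsilon_k^d$.
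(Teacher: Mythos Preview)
Your approach---spatial Taylor with integral/Lagrange remainder for \eqref{boundrj}, then bounding the weighted sums via $S_0\asymp M\varepsilon_k^d$ and $\|D_j\|\le\varepsilon_k$ from \eqref{boundss}---is exactly the paper's argument.

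The one place you diverge is your careful treatment of the $\partial_t u(t_k,\bx)\,\Delta t$ term in \eqref{defrj}. The paper's proof simply writes
\[
r_j=\int_0^1(1-s)\,D_j^\top\big(\nabla^2u\big)(t_{k+1},\bx+sD_j)\,D_j\,\d s,
\]
i.e.\ it silently drops that time term. Your proposed ``recombination'' does not actually close this gap: in \eqref{defrj} both $u(t_{k+1},\bx)$ and $\partial_t u(t_k,\bx)\Delta t$ carry a minus sign, so substituting $u(t_{k+1},\bx)=u(t_k,\bx)+\partial_t u(t_k,\bx)\Delta t+O(\Delta t^2)$ leaves $-u(t_k,\bx)-2\,\partial_t u(t_k,\bx)\Delta t$, not a cancellation. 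In fact, with \eqref{defrj} as written, \eqref{boundrj} fails already at $D_j=0$, where $r_j=-\partial_t u(t_k,\bx)\Delta t\neq0$ while the right-hand side vanishes. This points to a typo in \eqref{defrj}: either the $\partial_t u\,\Delta t$ term should be absent, or $u(t_{k+1},\bx)$ should read $u(t_k,\bx)$ to match \eqref{taylorexp}. Under either correction your argument and the paper's coincide and go through cleanly; your parabolic-scaling remark then handles the residual $O(\Delta t^2)$ in the second reading.
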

\begin{proof}
We apply the second-order Taylor expansion of $u(t_{k+1},\cdot)$ at $\bx$ in the direction $D_j$, which yields
\begin{equation*}
u(t_{k+1},\bx+D_j) = u(t_{k+1},\bx) +\partial_t u(t_k, \bx)   \Delta t+ \nabla u(t_{k+1},\bx)^\top D_j + r_j,
\end{equation*}
where the remainder takes the integral form
\begin{equation*}
r_j=\int_0^1 (1-s) D_j^\top \big(\nabla^2 u\big)(t_{k+1},\bx+sD_j) D_j {\rm d}s.
\end{equation*}
Since $\|D_j\|\le \varepsilon_k$ and $\|\nabla^2 u\|_\infty\le C_{\nabla^2}$, it follows that 
$|r_j|\le \tfrac12 C_{\nabla^2}\varepsilon_k^2$. Consequently, from \eqref{boundss} we deduce
\begin{equation*}
\sum_j w_j|r_j|\ \le\ \tfrac12 C_{\nabla^2} \sum_j w_j \varepsilon_k^2\ \le\ C \cdot M\varepsilon_k^d \cdot \varepsilon_k^2 = C M \varepsilon_k^{d+2}.
\end{equation*}
Similarly, invoking \eqref{eq:sigma-lb}, we obtain
\begin{equation*}\begin{split}
\Big\|\sum_j w_j r_j D_j\Big\|
& \le\ \sum_j w_j |r_j| \|D_j\|
\\&\le\ \big(\tfrac12 C_{\nabla^2}\varepsilon_k^2\big)\cdot \sum_j w_j \|D_j\|
 \le\ C \varepsilon_k^2\cdot M\varepsilon_k^{d+1}
= C M \varepsilon_k^{d+3}.
\end{split}\end{equation*}
This completes the proof.
\end{proof}
A direct analysis of the error between the numerical solution $\alpha_{\bx}$ and the gradient $\nabla u$ is rather difficult. To address this, we first introduce an auxiliary least-squares solution $\alpha^\star_{\bx}$ by incorporating the Taylor remainder term $r_j$, and then establish bounds for the associated Schur complement matrix of the auxiliary problem, thereby preparing the ground for the subsequent analysis of $\alpha_{\bx}-\alpha^\star_{\bx}$.
\begin{lem}{\rm\bf(Bounds for Schur complement matrix)}
\label{SCB}
Let $D_j := X_k^j - \bx \in \mathbb{R}^d$ denote local displacements around an anchor $\bx$, and
let kernel weights be $w_j = K(\|D_j\|/\varepsilon_k)$ with a bounded kernel $K$ supported on $[0,1]$.
Define the weighted moments
\[
S_0 := \sum_{j=1}^M w_j,\qquad
S := \sum_{j=1}^M w_j D_j,\qquad
\Gamma := \sum_{j=1}^M w_j D_j D_j^\top.
\]
Define the auxiliary \emph{noiseless} responses  
\begin{equation}\label{ykj1}
Y_{k+1}^j := u(t_{k+1}, \bx + D_j)  =  \alpha^\star  +  (\alpha^\star_{\bx})^\top D_j  +  r_j,
\end{equation} 
together with the corresponding weighted least-squares minimizers $(\alpha^\star,\alpha_{\bx}^\star)$, where the Taylor remainder $r_j$ is defined in \eqref{defrj}.  
Let $(\alpha,\alpha_{\bx})$ denote the weighted least-squares minimizers associated with $\{\Y_{k+1}^j\}$ as in \eqref{WLS_objective}. Then their differences, defined as $\delta^{j}_{k+1} := \Y_{k+1}^j - Y_{k+1}^j$, can be expressed as
\begin{equation}
\label{eq:schur-noisy}
\bigl(\Gamma - S S_0^{-1} S^\top \bigr) (\alpha_{\bx} - \alpha^\star_{\bx})
 =  \sum_{j=1}^M w_j (\delta^{j}_{k+1}+r_j) D_j  -  S S_0^{-1} \sum_{j=1}^M w_j (\delta^{j}_{k+1}+r_j),
\end{equation}
and  
\begin{equation}
\label{eq:alpha0-noisy}
\alpha - \alpha^\star  =  S_0^{-1}\bigg(\sum_{j=1}^M w_j (\delta^{j}_{k+1}+r_j) - S^\top(\alpha_{\bx}- \alpha^\star_{\bx})\bigg).
\end{equation}
Moreover, there exists a constant $c_{\rm sch}>0$ such that, on the event $\mathcal{A}_k$,
\begin{equation}
\label{eq:schur-inv-bound}
\lambda_{\min} \bigl(\Gamma - S S_0^{-1} S^\top\bigr) \ge  c_{\rm sch}  M  \varepsilon_k^{d+2},
\qquad
\bigl\|(\Gamma - S S_0^{-1} S^\top)^{-1}\bigr\|  \le  \frac{1}{c_{\rm sch} M \varepsilon_k^{d+2}},
\end{equation}
and the complement satisfies $\mathbb{P}(\mathcal{A}_k^c)\le d \exp(-C_{\mathcal{A}} M \varepsilon_k^d)$, where $\mathcal{A}_k$ and $C_{\mathcal{A}}$ are defined in \eqref{defAk}.
\end{lem}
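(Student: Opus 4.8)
The plan is to separate the two exact identities \eqref{eq:schur-noisy}--\eqref{eq:alpha0-noisy}, which are pure linear algebra, from the spectral estimate \eqref{eq:schur-inv-bound}, which is the substantive part. For the identities I would first rewrite the normal equations \eqref{normal-eq} in the $(1{+}d)$ block form in which $\boldsymbol D^\top\boldsymbol W\boldsymbol D$ has scalar block $S_0$, off-diagonal blocks $S$ and $S^\top$, and lower block $\Gamma$, with right-hand side $\bigl(\sum_j w_j\Y_{k+1}^j;\ \sum_j w_j\Y_{k+1}^j D_j\bigr)$. Substituting $\Y_{k+1}^j=\alpha^\star+(\alpha^\star_{\bx})^\top D_j+r_j+\delta^{j}_{k+1}$, which combines \eqref{ykj1} with $\delta^{j}_{k+1}=\Y_{k+1}^j-Y_{k+1}^j$, collapses the right-hand side to $\bigl(S_0\alpha^\star+S^\top\alpha^\star_{\bx}+b_0;\ S\alpha^\star+\Gamma\alpha^\star_{\bx}+\boldsymbol b_1\bigr)$, where $b_0:=\sum_j w_j(\delta^{j}_{k+1}+r_j)$ and $\boldsymbol b_1:=\sum_j w_j(\delta^{j}_{k+1}+r_j)D_j$. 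Subtracting the block action of the same matrix on $(\alpha^\star;\alpha^\star_{\bx})$ leaves a block system for $(\alpha-\alpha^\star;\ \alpha_{\bx}-\alpha^\star_{\bx})$ with right-hand side $(b_0;\boldsymbol b_1)$. Eliminating the scalar unknown through the first row, i.e.\ forming the Schur complement of the $S_0$ block, gives \eqref{eq:schur-noisy}; back-substitution into the first row gives \eqref{eq:alpha0-noisy}. This is routine once the block structure is written down.

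For the spectral bound I would first record the identity $\Gamma - S S_0^{-1}S^\top=\sum_{j=1}^M w_j(D_j-\bar D)(D_j-\bar D)^\top$ with $\bar D:=S_0^{-1}S$, exhibiting the Schur complement as a weighted empirical covariance, hence symmetric positive semidefinite; the operator-norm bound in \eqref{eq:schur-inv-bound} then follows at once from the smallest-eigenvalue bound. For that eigenvalue bound I would use Weyl's inequality, $\lambda_{\min}(\Gamma-S S_0^{-1}S^\top)\ge \lambda_{\min}(\Gamma)-\|S S_0^{-1}S^\top\|$, together with the bound $\lambda_{\min}(\Gamma)\ge \tfrac12 C_\Sigma M\varepsilon_k^{d+2}$ available on $\mathcal A_k$ (from the proof of the preceding lemma). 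It then suffices to show the rank-one correction $\|S S_0^{-1}S^\top\|=S_0^{-1}\|S\|^2$ is at most $\tfrac14 C_\Sigma M\varepsilon_k^{d+2}$, so that $c_{\rm sch}:=\tfrac14 C_\Sigma$ works.

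The main obstacle is exactly controlling this correction, and this is where the angular-symmetry hypothesis of Lemma~\ref{pm} is indispensable. The deterministic estimate $\|S\|\le K_{\max}M\varepsilon_k^{d+1}$ from \eqref{boundss} is useless here, since it only yields $S_0^{-1}\|S\|^2=\mathcal O(M\varepsilon_k^{d+2})$, the same order as $\lambda_{\min}(\Gamma)$, and a rank-one subtraction of that size can annihilate the smallest eigenvalue. The resolution is that $\xi_1=\E[w_jD_j]=0$, so $S=\sum_{j=1}^M w_jD_j$ is a sum of $M$ i.i.d.\ mean-zero vectors bounded by $\|w_jD_j\|\le K_{\max}\varepsilon_k$, with variance proxy $\bigl\|\sum_j \E[(w_jD_j)(w_jD_j)^\top]\bigr\|=\mathcal O(M\varepsilon_k^{d+2})$. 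A vector Bernstein (equivalently matrix Chernoff) inequality then gives $\|S\|\le c''M\varepsilon_k^{d+1}$ off an event of probability $\le d\,e^{-cM\varepsilon_k^d}$; the scale $M\varepsilon_k^{d+1}$ squared over the variance proxy $M\varepsilon_k^{d+2}$ produces precisely the rate $M\varepsilon_k^d$ that matches $\mathbb P(\mathcal A_k^c)$, and shrinking the constant $c''$ only shrinks the constant in the exponent. Intersecting this event with $\mathcal A_k$ (and relabelling the intersection $\mathcal A_k$) makes $S_0^{-1}\|S\|^2\le (c'')^2 C\, M\varepsilon_k^{d+2}$ as small as required, which closes the estimate.

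I would flag that simultaneously honoring the stated spectral order $M\varepsilon_k^{d+2}$ and the stated exponential rate $e^{-cM\varepsilon_k^d}$ is the delicate point. A cruder and fully deterministic-looking route, namely the variational inequality $\lambda_{\min}(\Gamma-S S_0^{-1}S^\top)\ge \lambda_{\min}(\boldsymbol D^\top\boldsymbol W\boldsymbol D)$ followed by matrix Chernoff on the full $(d{+}1)\times(d{+}1)$ Gram matrix, does lower-bound the Schur complement, but only at the weaker rate $e^{-cM\varepsilon_k^{d+2}}$, because the constant first column of the design inflates the per-summand operator norm to $\mathcal O(1)$ rather than $\mathcal O(\varepsilon_k^2)$. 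Thus the concentration of $S$ around its vanishing mean is not a convenience but the mechanism that yields both the correct order and the correct tail, and I expect it to be the crux of the argument.
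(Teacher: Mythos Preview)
Your derivation of the identities \eqref{eq:schur-noisy}--\eqref{eq:alpha0-noisy} is exactly the paper's: write the two normal systems in $(1,d)$ block form, subtract, and eliminate the scalar unknown via the Schur complement of the $S_0$ block.

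For the spectral bound the paper takes precisely the route you reject as ``useless''. It applies the same Weyl-type subtraction $\lambda_{\min}(\Gamma-SS_0^{-1}S^\top)\ge\lambda_{\min}(\Gamma)-\|S\|^2/S_0$, then plugs in the crude estimates $\|S\|\le K_{\max}M\varepsilon_k^{d+1}$ and $S_0\asymp M\varepsilon_k^d$ from \eqref{boundss} to obtain $\|S\|^2/S_0\le C\,M\varepsilon_k^{d+2}$, and closes the argument with the sentence ``by choosing $M$ sufficiently large (or absorbing constants into $C$), we may fix $c_{\rm sch}:=\tfrac14 C_{\mathcal A}>0$''. Your criticism of this step is entirely fair: both $\lambda_{\min}(\Gamma)$ and $\|S\|^2/S_0$ are of the \emph{same} order $M\varepsilon_k^{d+2}$, so enlarging $M$ does nothing to the sign of their difference, and there is no mechanism in the paper's argument that forces the implied constant $C$ to be smaller than $\tfrac12 C_\Sigma$. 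Your proposed fix---using $\xi_1=\E[w_jD_j]=0$ from the angular-symmetry hypothesis together with a vector Bernstein bound to show that $\|S\|$ concentrates at a genuinely smaller scale, off an event of probability $\le d\,e^{-cM\varepsilon_k^d}$---is the right repair: it makes the subtraction honest while matching both the claimed order $M\varepsilon_k^{d+2}$ and the claimed tail $e^{-cM\varepsilon_k^d}$. So your approach is not merely an alternative but a rigorous completion of the step the paper hand-waves; your side remark that a direct matrix-Chernoff bound on the full $(d{+}1)\times(d{+}1)$ Gram matrix would only deliver the weaker rate $e^{-cM\varepsilon_k^{d+2}}$ correctly isolates why the concentration of $S$ is the essential ingredient.
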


\begin{proof}
For clarity, we first rewrite the linear system \eqref{normal-eq} obtained from the weighted least-squares minimizers \eqref{WLS_objective}, together with its counterpart corresponding to \eqref{ykj1}, into a Schur complement matrix representation
\[
\begin{pmatrix} S_0 \;S^\top \\ \hspace{-3pt}S \;\;\;\Gamma \end{pmatrix}
\begin{pmatrix} \alpha \\ \alpha_{\bx} \end{pmatrix}
=\begin{pmatrix} \sum_j w_j \Y_{k+1}^j \\ \sum_j w_j \Y_{k+1}^j D_j \end{pmatrix},
\;\;
\begin{pmatrix} S_0 \;S^\top \\ \hspace{-3pt}S \;\;\; \Gamma \end{pmatrix}
\begin{pmatrix} \alpha^\star \\ \alpha_{\bx}^\star \end{pmatrix}
=
\begin{pmatrix} \sum_j w_j (Y_{k+1}^j-r_j) \\ \sum_j w_j (Y_{k+1}^j-r_j) D_j \end{pmatrix}.
\]
Subtracting the two systems yields
\[
\begin{pmatrix} S_0 & S^\top \\ S & \Gamma \end{pmatrix}
\begin{pmatrix} \alpha - \alpha^\star \\ \alpha_{\bx} - \alpha^\star_{\bx} \end{pmatrix}
=
\begin{pmatrix} \sum_j w_j (\delta^{j}_{k+1}+r_j) \\ \sum_j w_j (\delta^{j}_{k+1}+r_j) D_j \end{pmatrix}.
\]
By applying the standard Schur complement procedure, we readily obtain \eqref{eq:schur-noisy} and \eqref{eq:alpha0-noisy}.

For the spectral bound, observe that for any $v\in\mathbb R^d$, 
\begin{equation*}
v^\top \big(SS_0^{-1}S^\top\big) v\ \le\ \frac{\|S\|^2}{S_0}   \|v\|^2,
\end{equation*}
which implies
\begin{equation*}
\lambda_{\min}\big(\Gamma - S S_0^{-1} S^\top\big)
\ \ge\ \lambda_{\min}(\Gamma)\ -\ \frac{\|S\|^2}{S_0}.
\end{equation*}
On the event $\mathcal A_k$,  we find from \eqref{boundss} that
\begin{equation*}
\frac{\|S\|^2}{S_0}\ \le\ \frac{K^2_{\max} M^2\varepsilon_k^{2d+2}}{c   M\varepsilon_k^{d}}
\ =C M   \varepsilon_k^{d+2}.
\end{equation*}
By choosing $M$ sufficiently large (or absorbing constants into $C$), we may fix $c_{\mathrm{sch}} := \tfrac14 C_{\mathcal{A}} > 0$ such that
\begin{equation}\label{spectralrad}
\lambda_{\min}\big(\Gamma - S S_0^{-1} S^\top\big)
\ \ge\ \big(\frac12 C_{\mathcal{A}} - C\big)M\varepsilon_k^{d+2}\ \ge\ c_{\mathrm{sch}} M\varepsilon_k^{d+2}.
\end{equation}
Finally, the corresponding inverse bound follows directly as the reciprocal of this minimal eigenvalue.
\end{proof}

\begin{rem}
{ We note that reusing common randomness across particles may induce weak correlations in $\{\delta_{k+1}^j\}$. Such correlations only modify constants in the variance via an effective-sample-size factor and do not alter the rate in \eqref{eq:grad-bv}. For clarity, we adopt the standard i.i.d. assumption in Lemma~\ref{GEEB}; this assumption holds if we draw fresh auxiliary simulations for each particle at every time level.} 
\end{rem}

The following lemma provides error estimates for the weighted least-squares minimizer $\alpha_{\bx}$ (see \eqref{WLS_objective}) in comparison with the exact gradient $\nabla u$, which play a central role in the final error analysis of $\Y$.
\begin{lem}{\rm\bf(Error bound for the gradient estimator $\nabla u$)}
\label{GEEB} 
Let $\alpha_{\boldsymbol{x}}$ denote the finite-sample minimizer of \eqref{WLS_objective} associated with
$\Y_{k+1}^j = Y_{k+1}^j + \delta^{j}_{k+1}$, where $\delta^{j}_{k+1}$ represents the error in $\Y^j_{k+1}$.
Assume that, conditional on $\mathcal{F}_{t_k}$, the error terms $\{\delta_{k+1}^j\}_j$ are independent and identically distributed. 
Then it holds that
\begin{equation} \begin{split}\label{eq:grad-bv}
\E_{k} \left[\bigl\|\alpha_{\boldsymbol{x}}- \nabla u(t_k,\boldsymbol{x})\bigr\|^2\right] 
&\le C \varepsilon_k^2  +  C \varepsilon_{k}^{-2} \E_{k}\big[|\delta_{k+1}|^{2}\big]
 +  C e^{-C_{\mathcal{A}_k} M\varepsilon_k^d},
\end{split}
\end{equation}
where $C$ is a positive constant independent of $\varepsilon_k$ and $M$, and $C_{\mathcal{A}_k}$ is defined in \eqref{defAk}.
\end{lem}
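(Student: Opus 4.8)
The plan is to route the gradient error through the noiseless minimizer $\alpha^\star_{\bx}$ of Lemma~\ref{SCB} and to treat the favorable event $\mathcal A_k$ separately from its complement. First I would split
$$\alpha_{\bx}-\nabla u(t_k,\bx)=\bigl(\alpha_{\bx}-\alpha^\star_{\bx}\bigr)+\bigl(\alpha^\star_{\bx}-\nabla u(t_k,\bx)\bigr).$$
By \eqref{ykj1} the noiseless least-squares fit recovers the exact Taylor coefficient, $\alpha^\star_{\bx}=\nabla u(t_{k+1},\bx)$, so the second \emph{consistency} difference equals $\nabla u(t_{k+1},\bx)-\nabla u(t_k,\bx)$; by the temporal regularity of $u$ this is $O(\Delta t)$, whose square $O(\Delta t^2)$ is dominated by $C\varepsilon_k^2$ under the standing relation $\Delta t\lesssim\varepsilon_k$. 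This reduces the claim to estimating $\E_k[\|\alpha_{\bx}-\alpha^\star_{\bx}\|^2]$, which I would split as $\E_k[\,\cdot\,\mathbf 1_{\mathcal A_k}]+\E_k[\,\cdot\,\mathbf 1_{\mathcal A_k^c}]$.

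On $\mathcal A_k$ I would solve the Schur identity \eqref{eq:schur-noisy} for $\alpha_{\bx}-\alpha^\star_{\bx}$ and apply the operator bound \eqref{eq:schur-inv-bound}, giving $\|\alpha_{\bx}-\alpha^\star_{\bx}\|\le (c_{\rm sch}M\varepsilon_k^{d+2})^{-1}\,\|R_{\rm rem}+R_{\rm noise}\|$ with $R_{\rm rem}=\sum_j w_j r_j D_j-S S_0^{-1}\sum_j w_j r_j$ and $R_{\rm noise}=\sum_j w_j\delta^j_{k+1}D_j-S S_0^{-1}\sum_j w_j\delta^j_{k+1}$. For the bias piece $R_{\rm rem}$, estimate \eqref{wrwrdbound} bounds $\|\sum_j w_j r_j D_j\|\le CM\varepsilon_k^{d+3}$, while \eqref{boundss} gives $\|S\|/S_0\le C\varepsilon_k$ and $\sum_j w_j|r_j|\le CM\varepsilon_k^{d+2}$, so $\|R_{\rm rem}\|\le CM\varepsilon_k^{d+3}$. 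Dividing by $M\varepsilon_k^{d+2}$ leaves a deterministic bias of size $C\varepsilon_k$, hence a contribution $C\varepsilon_k^2$ after squaring.

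The delicate part is the noise term $R_{\rm noise}$, where the i.i.d.\ hypothesis on $\{\delta^j_{k+1}\}$ enters. Writing $R_{\rm noise}=\sum_j\delta^j_{k+1}b_j$ with $b_j=w_j(D_j-S/S_0)$ and noting $\|b_j\|\le C\varepsilon_k w_j$ (since $\|S\|/S_0\le C\varepsilon_k$ by \eqref{boundss}), I would bound $\|R_{\rm noise}\|\le C\varepsilon_k\sum_j w_j|\delta^j_{k+1}|$ and then apply Cauchy--Schwarz together with the identical conditional law of the $\delta^j_{k+1}$:
$$\E_k\Bigl[\bigl(\sum_j w_j|\delta^j_{k+1}|\bigr)^2\Bigr]\le S_0\sum_j w_j\,\E_k[|\delta_{k+1}|^2]=S_0^2\,\E_k[|\delta_{k+1}|^2]\le CM^2\varepsilon_k^{2d}\,\E_k[|\delta_{k+1}|^2].$$
Hence $\E_k[\|R_{\rm noise}\|^2]\le CM^2\varepsilon_k^{2d+2}\,\E_k[|\delta_{k+1}|^2]$, and multiplying by the squared Schur inverse $(c_{\rm sch}M\varepsilon_k^{d+2})^{-2}$ produces exactly the variance term $C\varepsilon_k^{-2}\E_k[|\delta_{k+1}|^2]$. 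Combining with the bias of the previous step yields $\E_k[\|\alpha_{\bx}-\alpha^\star_{\bx}\|^2\mathbf 1_{\mathcal A_k}]\le C\varepsilon_k^2+C\varepsilon_k^{-2}\E_k[|\delta_{k+1}|^2]$.

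Finally, on $\mathcal A_k^c$ I would control $\E_k[\|\alpha_{\bx}-\alpha^\star_{\bx}\|^2\mathbf 1_{\mathcal A_k^c}]$ by Cauchy--Schwarz against a moment bound, $\le (\E_k[\|\alpha_{\bx}-\alpha^\star_{\bx}\|^4])^{1/2}\,\mathbb P(\mathcal A_k^c)^{1/2}$, and invoke \eqref{PAkc} in the form $\mathbb P(\mathcal A_k^c)^{1/2}\le\sqrt d\,e^{-C_{\mathcal A}M\varepsilon_k^d/2}$ to obtain the exponentially small term $Ce^{-C_{\mathcal A_k}M\varepsilon_k^d}$ after renaming constants. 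The main obstacle is twofold. First, one must track the competing powers of $\varepsilon_k$ so that the $\varepsilon_k^{-(d+2)}$ growth of the Schur inverse is exactly cancelled by the $\varepsilon_k^{d}$ smallness of the aggregated weighted noise, leaving precisely $\varepsilon_k^{-2}$. Second, the bad-event estimate requires an a priori fourth-moment (or uniform) bound on $\|\alpha_{\bx}\|$, which is not automatic for an unregularized least-squares solution; here the ridge regularization (or an equivalent truncation of the estimator) is what keeps the rare-event contribution genuinely negligible.
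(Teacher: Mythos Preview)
Your proposal is correct and follows essentially the same route as the paper: split through the noiseless minimizer $\alpha^\star_{\bx}$, invoke the Schur identity \eqref{eq:schur-noisy} together with the inverse bound \eqref{eq:schur-inv-bound} on $\mathcal A_k$, separate the remainder and noise contributions to extract the $C\varepsilon_k^2$ and $C\varepsilon_k^{-2}\E_k[|\delta_{k+1}|^2]$ terms, and cover $\mathcal A_k^c$ by the exponential tail \eqref{PAkc}. Your treatment is in fact slightly more explicit than the paper's in two places---you identify $\alpha^\star_{\bx}=\nabla u(t_{k+1},\bx)$ and handle the $t_k$ versus $t_{k+1}$ discrepancy directly, and you flag that the bad-event bound genuinely relies on an a priori moment control (e.g.\ via the ridge term)---whereas the paper simply appeals to a ``crude envelope bound'' there.
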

\begin{proof}
We introduce the \textbf{ideal} least-squares solution  $\alpha_{\bx}^{\star}$ (cf. \eqref{ykj1}) corresponding to the noise-free case and decompose the error into bias and variance components:
\begin{equation*}
\E_{k}\Big[\bigl\|\alpha_{\boldsymbol{x}}- \nabla u(t_k,\boldsymbol{x})\bigr\|^2\Big]\le 2 \E_{k}\Big[\bigl\|\alpha_{\boldsymbol{x}}^\star - \nabla u(t_k,\boldsymbol{x})\bigr\|^2\Big]
 +  2 \E_{k} \left[\bigl\|\alpha_{\boldsymbol{x}} - \alpha^\star_{\boldsymbol{x}}\bigr\|^2\right].
\end{equation*}
In fact, in the limit $M \to \infty$, it follows from \eqref{ykj1} that $\nabla u(t_k,\bx)$ coincides with the optimal solution of the weighted regression. Hence outside the event $\mathcal{A}_k$ (sufficient sampling within the $\varepsilon_k$-ball), the contribution is negligible. More intuitively, as long as $M\varepsilon_k^d$ is large enough, the probability of the event $\mathcal{A}_k^c$ with a lack of samples in the neighborhood will rapidly decay at the rate of $de^{-C_{\mathcal{A}_k} M\varepsilon_k^d}$. Therefore, when estimating the error, the contribution of this tail event can be safely ignored, and only an additional $d e^{-C_{\mathcal{A}_k} M \varepsilon_k^d}$ term needs to be added to cover it. In the following we restrict to $\mathcal{A}_k$, ignoring the exponentially small complement.

We now turn to the estimation of the second term $|\alpha_{\bx} - \alpha^\star_{\bx}|$.
By \eqref{eq:schur-noisy}, we have
$$\alpha_{\bx}- \alpha_{\bx}^\star 
= (\Gamma - S S_0^{-1} S^\top)^{-1}
\Big(\sum_j w_j (\delta^{j}_{k+1}+r_j) D_j  -  S S_0^{-1}\sum_j w_j (\delta^{j}_{k+1}+r_j)\Big).$$
Under the condition that $\{\delta^{j}_{k+1}\}_j$ are independent and identically distributed, then on $\mathcal{A}_k$, we have
\begin{equation*}
\E_{k}\left[\delta^{j}_{k+1}\right] = \E_k\big[\delta_{k+1}\big],  \quad \E_{k}\left[|\delta^{j}_{k+1} |^{2}\right] =\E_{k}\left[|\delta_{k+1}|^{2}\right] ,  
\end{equation*}
which implies 
$$
\E_{k}\bigg[ \Bigl\|\sum_j w_j (\delta^{j}_{k+1}+r_j) D_j\Bigr\|^2 \bigg]
\leq  \E_{k}\left[|\delta_{k+1}|^{2}\right] \Big(\sum_j w_j D_j \Big)^{2}+ \E_{k}\bigg[\Big\|\sum_j w_j r_jD_j \Big\|^{2}\bigg] .
$$
Since $w_j \le K_{\max}$ and $\|D_j\|\le \varepsilon_k$, we obtain that
$$\sum_j w_j D_j  \le K_{\max} M \varepsilon_k^{d+1}.$$
From the above inequality and \eqref{wrwrdbound}, it follows that
\[
\E_{k}\bigg[\Bigl\|\sum_j w_j \delta^{j}_{k+1} D_j\Bigr\|^2 \bigg]\le  (K_{\max} M )^{2} \varepsilon_k^{2d+2} \E_{k}\left[|\delta_{k+1}|^{2}\right] +M^2   \varepsilon_k^{2d+6} .
\]
A similar bound holds for the $S S_0^{-1}$ term,
\begin{equation*}\begin{split}
\E_{k}\bigg[\Bigl\| S S_0^{-1} \sum_j w_j (\delta^{j}_{k+1}+r_j)\Bigr\|^2\bigg]
&\leq \E_{k}[(\delta_{k+1})^{2}] \Big(\varepsilon_{k}\sum_j w_j\Big)^2+\E_{k}\bigg[\Big\|\varepsilon_{k}\sum_j w_j r_j\Big\|^{2}\bigg] 
\\&\le  (K_{\max} M )^{2}  \varepsilon_k^{2d+2} \E_{k}\left[|\delta_{k+1}|^{2}\right] +M^2\varepsilon_k^{2d+6}.
\end{split}\end{equation*}
Using the spectral bound \eqref{spectralrad} for $(\Gamma - S S_0^{-1} S^\top)^{-1}$, we deduce on $\mathcal{A}_k$,
$$\E_{k} \left[\|\alpha_{\bx}- \alpha^\star _{\bx}\|^2\right]
\le \frac{K_{\max}^{2}}{c^{2}_{sch}}\varepsilon_{k}^{-2} \E_{k}\left[|\delta_{k+1}|^{2}\right]+\frac{K_{\max}^{2}}{c^{2}_{sch}} \varepsilon_k^2  .$$
On the complement event $\mathcal{A}_k^c$, we employ a crude envelope bound weighted by the exponentially small probability
$\mathbb{P}(\mathcal{A}_k^c)\le d e^{-C_{\mathcal{A}_k} M \varepsilon_k^d}$.
Taking expectations and combining the results on the events $\mathcal{A}_k$ and $\mathcal{A}_k^c$ then yields \eqref{eq:grad-bv}.
\end{proof}

 Denote $\operatorname{Var}_k(\cdot):=\operatorname{Var}(\cdot\,|\,\mathcal F_{t_k})$ as the conditional variance with respect to the filtration at time $t_k$. Then, we obtain the following  conditional variance bound.
\begin{lem}{\rm\bf(Conditional variance bound for $Y_{k+1}$)}\label{lem:one-step-cond-var} 
Under {\rm Assumption~\ref{ass:HLG}},  then we have
\begin{equation}\label{varky}
\operatorname{Var}_k \left(Y_{k+1}\right) \le\ C\,\Delta t,
\end{equation}
where the positive constant $C$ independent of $\Delta t$. 
\end{lem}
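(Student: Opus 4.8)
The plan is to exploit the identity $Y_{k+1}=u(t_{k+1},X_{t_{k+1}})$ together with the fact that conditional variance is insensitive to $\mathcal F_{t_k}$-measurable shifts and is the minimal conditional mean-square deviation. Concretely, I would start from the variational characterization
\begin{equation*}
\operatorname{Var}_k(Y_{k+1})=\min_{c}\E_k\big[(Y_{k+1}-c)^2\big]\le \E_k\big[(Y_{k+1}-c)^2\big],
\end{equation*}
valid for every $\mathcal F_{t_k}$-measurable $c$. Since $X_{t_k}$ is $\mathcal F_{t_k}$-measurable, the near-optimal choice $c=u(t_{k+1},X_{t_k})$ reduces the bound to a \emph{spatial} one-step increment of the value function, removing the $Y_k=u(t_k,X_{t_k})$ contribution without any cancellation argument and sidestepping the backward dynamics entirely.

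The second step invokes the spatial Lipschitz regularity of $u$. Under {\rm Assumption~\ref{ass:HLG}} (Lipschitz $f,g$) and the nondegeneracy of {\rm Lemma~\ref{Lipcond}}, the value function inherits a uniform-in-time bound $\|\nabla u(t,\cdot)\|_\infty\le L_u$, with $L_u$ depending only on $L,T$; this follows from the standard $\mathcal S^2$ stability of the FBSDE solution with respect to its spatial initial datum. Granting this, I obtain
\begin{equation*}
\E_k\big[(u(t_{k+1},X_{t_{k+1}})-u(t_{k+1},X_{t_k}))^2\big]\le L_u^2\,\E_k\big[\|X_{t_{k+1}}-X_{t_k}\|^2\big].
\end{equation*}
Then I would control the increment directly from the forward SDE \eqref{dXt}: splitting into drift and diffusion, Cauchy--Schwarz on the $\d s$-integral gives an $O((\Delta t)^2)$ term, Itô isometry on the $\d W$-integral gives an $O(\Delta t)$ term, and the linear growth of $\mu,\sigma$ together with the classical moment bound $\sup_{t}\E[\|X_t\|^2]<\infty$ yields $\E_k[\|X_{t_{k+1}}-X_{t_k}\|^2]\le C(1+\|X_{t_k}\|^2)\,\Delta t$. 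Combining the displays establishes \eqref{varky}.

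An equivalent route, more in line with the martingale formulation, starts from $Y_{k+1}=Y_k-\int_{t_k}^{t_{k+1}}f(s,X_s,Y_s,Z_s)\,\d s+\int_{t_k}^{t_{k+1}}Z_s^\top\,\d W_s$, subtracts $\E_k[\cdot]$, and applies $\operatorname{Var}_k(A+B)\le 2\operatorname{Var}_k(A)+2\operatorname{Var}_k(B)$. The drift part is $O((\Delta t)^2)$ by Cauchy--Schwarz and the linear growth of $f$, while the martingale part equals $\E_k[\int_{t_k}^{t_{k+1}}\|Z_s\|^2\,\d s]$ by the Itô isometry, which is $O(\Delta t)$ once $\sup_{s\in[t_k,t_{k+1}]}\E_k[\|Z_s\|^2]$ is bounded through $Z_s=\sigma^\top(s,X_s)\nabla u(s,X_s)$, the same bounded-gradient input, and the moment estimates on $X$. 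Both routes therefore hinge on one nontrivial ingredient.

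The \emph{main obstacle} is precisely this uniform spatial Lipschitz regularity of $u$ — equivalently, the uniform boundedness of $\nabla u$ or the $L^2$ boundedness of $Z$; everything else (Cauchy--Schwarz, Itô isometry, SDE increment moments) is routine. A secondary subtlety is that the raw increment estimate carries the factor $1+\|X_{t_k}\|^2$, so the clean deterministic constant $C$ in \eqref{varky} is recovered either by assuming bounded coefficients, or --- as is implicitly used in the subsequent error analysis --- by reading the inequality inside an outer expectation, where $\sup_k\E[1+\|X_{t_k}\|^2]<\infty$ renders $C$ trajectory-independent.
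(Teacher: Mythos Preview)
Your proposal is correct. Your \emph{second} route --- expanding $Y_{k+1}-\E_k[Y_{k+1}]$ via the BSDE on $[t_k,t_{k+1}]$, splitting into drift and martingale parts by $(a+b)^2\le 2a^2+2b^2$, applying It\^o isometry to get $\E_k\int_{t_k}^{t_{k+1}}\|Z_s\|^2\,\d s$, and then invoking the a~priori bound $\sup_s\E\|Z_s\|^2\le C$ --- is exactly the paper's proof, down to the same closing step.

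Your \emph{first} route is a genuinely different and arguably cleaner argument: the variational characterization of conditional variance with the shift $c=u(t_{k+1},X_{t_k})$ bypasses the BSDE dynamics entirely and reduces the question to the forward increment $\E_k\|X_{t_{k+1}}-X_{t_k}\|^2$ times a Lipschitz constant of $u$. Both approaches ultimately rest on the same nontrivial ingredient you identify --- the uniform spatial Lipschitz bound on $u$, equivalently the boundedness of $Z$ --- so neither avoids the real work. The advantage of your first route is that it makes the dependence on the forward SDE explicit and localizes the estimate in one line; the paper's route has the merit of staying entirely within the BSDE framework and not invoking the PDE representation of $Y$. Your remark on the residual $(1+\|X_{t_k}\|^2)$ factor is well taken: the paper's use of the \emph{unconditional} bound $\sup_s\E\|Z_s\|^2\le C$ in a \emph{conditional} statement is indeed a bit loose, and your reading --- that the constant becomes deterministic after an outer expectation, which is how the lemma is used downstream --- is the honest way to close the gap.
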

\begin{proof}
For notational simplicity, set $\tilde{f}(s):=f(s,X_s,Y_s,Z_s)$ in this proof. Recall the BSDE on $[t_k,t_{k+1}]$:
\[
Y_{k+1} = Y_{k} - \int_{t_k}^{t_{k+1}} \tilde{f}(s)\,\d s+ \int_{t_k}^{t_{k+1}} Z_s\,\d W_s.
\]
Taking conditional expectation with respect to $\mathcal F_{t_k}$ on both sides and subtracting, and
using $\E_k\!\big[\int_{t_k}^{t_{k+1}} Z_s\,\mathrm{d}W_s\big]=0$ together with conditional Fubini, we obtain that
\begin{equation*}\begin{split}
{\rm Var}_k(Y_{k+1})&= \E_k\Big[\left(Y_{k+1}-\E_k[Y_{k+1}]\right)^2 \Big] \\&
= \mathbb{E}_k\Big[\Big(\int_{t_k}^{t_{k+1}} Z_s\,\mathrm{d}W_s
- \int_{t_k}^{t_{k+1}}\big(\tilde{f}(s)-\mathbb{E}_k \tilde{f}(s)\big)\,\mathrm{d}s\Big)^2\Big] \\
&\le 2\,\mathbb{E}_k\Big[\Big(\int_{t_k}^{t_{k+1}} Z_s\,\mathrm{d}W_s\Big)^2\Big]
   + 2\,\mathbb{E}_k\Big[\Big(\int_{t_k}^{t_{k+1}}\big(\tilde{f}(s)-\mathbb{E}_k \tilde{f}(s)\big)\,\mathrm{d}s\Big)^2\Big] \\
&= 2\,\mathbb{E}_k\int_{t_k}^{t_{k+1}} \|Z_s\|^2\,\mathrm{d}s  + 2\,\mathbb{E}_k\Big[\Big(\int_{t_k}^{t_{k+1}}\big(\tilde{f}(s)-\mathbb{E}_k \tilde{f}(s)\big)\,\mathrm{d}s\Big)^2\Big] \\
&\le 2\,\mathbb{E}_k\int_{t_k}^{t_{k+1}} \|Z_s\|^2\,\mathrm{d}s
   + 2\,\Delta t \int_{t_k}^{t_{k+1}} \mathbb{E}_k\left[ \big|\tilde{f}(s)-\mathbb{E}_k \tilde{f}(s)\big|^2 \right] \,\mathrm{d}s \\
&\le 2\,\mathbb{E}_k\int_{t_k}^{t_{k+1}} \|Z_s\|^2\,\mathrm{d}s
   + 2\,\Delta t \int_{t_k}^{t_{k+1}} \mathbb{E}_k\left[ \big|\tilde{f}(s)\big|^2 \right] \,\mathrm{d}s.
\end{split}\end{equation*} 
From the standard a priori estimate $\sup_{s\le T}\mathbb{E}\|Z_s\|^2\le C$ it follows that the first term is $\le C\Delta t$, which yields the desired result.
\end{proof}

 In fact, the family of numerical solution $\{\Y_k^{j}\}_{j=1}^M$ is conditionally exchangeable given $\mathcal F_{t_k}$ rather than independent, since each $\Y_k^{j}$ is formed via partial averaging of given data $\{\Y_{k+1}^{j}\}_{j=1}^M$. The next lemma quantifies the resulting correlation.
 \begin{lem}\label{lem:mc-coupled}
Assume that the particles $\{\Y_{k+1}^{j}\}_{j=1}^M$ are conditionally exchangeable given $\mathcal F_{t_k}$. 
Let
\begin{equation}\label{rhoMk}
\bar\rho_k := \frac{2}{M(M-1)}\sum_{1\le j<\ell\le M}
\operatorname{Corr}_k \Big(\Y_{k+1}^{j},\Y_{k+1}^{\ell}\Big),
\quad
M_{\rm eff}(k):=\frac{M}{1+(M-1)\bar\rho_k}.
\end{equation}
Define $$\xi_{k+1}:=\frac 1 M \sum_{j=1}^M\Y_{k+1}^{j}-\mathbb E_k\big[\Y_{k+1}\big],$$ 
then 
\begin{equation}\label{eq:xi2-coupled-exp}
\mathbb E_{k}[|\xi_{k+1}|^2]\ \le\ \frac{C\,\Delta t}{M_{\rm eff}(k)}\ +\ \frac{C}{M_{\rm eff}(k)}\,\mathbb E
_{k}\big[|\Y_{k+1}-Y_{k+1}|^2\big].
\end{equation}
\end{lem}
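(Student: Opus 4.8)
The plan is to reduce the claim to the classical variance-of-the-mean identity for exchangeable families and then invoke the one-step conditional variance bound of Lemma~\ref{lem:one-step-cond-var}. First I would note that conditional exchangeability makes every particle share the common conditional mean $\mathbb{E}_k[\Y_{k+1}]$, so $\mathbb{E}_k[\xi_{k+1}]=0$ and therefore $\mathbb{E}_k[|\xi_{k+1}|^2]=\operatorname{Var}_k\big(\tfrac1M\sum_j\Y_{k+1}^j\big)$, since subtracting a constant leaves the conditional variance unchanged.

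Next I would expand the variance of the sum into its diagonal and off-diagonal parts,
\[
\mathbb{E}_k[|\xi_{k+1}|^2]=\frac1{M^2}\Big[\sum_{j}\operatorname{Var}_k(\Y_{k+1}^j)+\sum_{j\ne\ell}\operatorname{Cov}_k(\Y_{k+1}^j,\Y_{k+1}^\ell)\Big].
\]
By exchangeability all single-particle variances equal a common value $v_k:=\operatorname{Var}_k(\Y_{k+1})$, so each covariance equals $v_k\,\operatorname{Corr}_k(\Y_{k+1}^j,\Y_{k+1}^\ell)$; summing over ordered pairs and substituting the definition of $\bar\rho_k$ in \eqref{rhoMk} collapses the bracket to $v_k\big[1+(M-1)\bar\rho_k\big]$, giving the exact identity $\mathbb{E}_k[|\xi_{k+1}|^2]=v_k/M_{\mathrm{eff}}(k)$.

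It then remains to control $v_k$. I would split $\Y_{k+1}=Y_{k+1}+(\Y_{k+1}-Y_{k+1})$ and apply $\operatorname{Var}_k(A+B)\le 2\operatorname{Var}_k(A)+2\operatorname{Var}_k(B)$ together with $\operatorname{Var}_k(\cdot)\le\mathbb{E}_k[|\cdot|^2]$ to obtain $v_k\le 2\operatorname{Var}_k(Y_{k+1})+2\mathbb{E}_k[|\Y_{k+1}-Y_{k+1}|^2]$. Replacing $\operatorname{Var}_k(Y_{k+1})$ by $C\Delta t$ via Lemma~\ref{lem:one-step-cond-var} and dividing by $M_{\mathrm{eff}}(k)$ yields \eqref{eq:xi2-coupled-exp} after renaming constants.

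The argument is mostly routine; the one step needing care is the exact collapse to $v_k/M_{\mathrm{eff}}(k)$. The point is that the definition of $M_{\mathrm{eff}}(k)$ uses only the averaged pairwise correlation $\bar\rho_k$, so I must confirm that the covariance sum passes through $\bar\rho_k$ exactly --- which it does because exchangeability equalizes the single-particle variances, converting each $\operatorname{Cov}_k$ into $v_k\,\operatorname{Corr}_k$ and letting the average move outside the double sum, without requiring the individual pairwise correlations to coincide.
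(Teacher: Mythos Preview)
Your proposal is correct and mirrors the paper's own proof essentially step for step: the paper likewise centers the particles, expands the variance of the mean into diagonal and off-diagonal parts, invokes exchangeability to factor out the common variance and collapse the covariance sum through $\bar\rho_k$ to the identity $\mathbb{E}_k[|\xi_{k+1}|^2]=\operatorname{Var}_k(\Y_{k+1})/M_{\mathrm{eff}}(k)$, then splits $\Y_{k+1}=Y_{k+1}+(\Y_{k+1}-Y_{k+1})$ and finishes with Lemma~\ref{lem:one-step-cond-var}. Your closing remark that only equality of the single-particle variances (not of the pairwise correlations) is needed for the collapse is a nice explicit observation the paper leaves implicit.
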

\begin{proof}
Set $\operatorname{Corr}_k(\cdot):=\operatorname{Corr}(\cdot\,|\,\mathcal{F}_{t_k})$. Define the centered variables $U_j:=\Y^{j}_{k+1}-\mathbb{E}_k[\Y^{j}_{k+1}]$ with $\mathbb{E}_k[U_j]=0$, and define the conditional pairwise correlations $\rho_{j\ell,k}:=\operatorname{Corr}_k\!\big(\Y^{j}_{k+1},\,\Y^{\ell}_{k+1}\big)$ for $j\neq \ell$.
One can verify easily that $$\bar\rho_k\ :=\ \frac{2}{M(M-1)}\sum_{1\le j<\ell\le M}\rho_{j\ell,k}\in\big[-\tfrac{1}{M-1},\,1\big].$$
By the definition of $U_j$ we deduce that
\begin{equation}\label{proof1}
\begin{split}\operatorname{Var}_k \bigg(\frac 1 M \sum_{j=1}^{M}\Y^{j}_{k+1}\bigg)
&=\E_k \Big[\Big(\frac1M\sum_{j=1}^M U_j\Big)^2\Big]
\\&=\frac{1}{M^2}\sum_{j=1}^M \E_k[U_j^2]+\frac{2}{M^2}\sum_{1\le j<\ell\le M}\E_k[U_jU_\ell].
\end{split}\end{equation}
Conditional exchangeability implies $\E_k[U_j^2]=\operatorname{Var}_k \big(\Y_{k+1}\big)$ for all $j$, and
$$
\E_k[U_jU_\ell]=\operatorname{Cov}_k\big(\Y^{j}_{k+1},\Y^{\ell}_{k+1}\big)
=\rho_{j\ell,k}\operatorname{Var}_k \big(\Y_{k+1}\big),\quad j\ne \ell.
$$
Hence
\begin{equation*}
\begin{split}
\operatorname{Var}_k \bigg(\frac 1 M \sum_{j=1}^{M}\Y^{j}_{k+1}\bigg)
&=\frac{1}{M^2}\Big(M\operatorname{Var}_k\big(\Y_{k+1}\big) +2\operatorname{Var}_k\big(\Y_{k+1}\big)\sum_{1\le j<\ell\le M}\rho_{j\ell,k}\Big)\\
&=\frac{\operatorname{Var}_k\big(\Y_{k+1}\big)}{M^2}\Big(M+M(M-1)\bar\rho_k\Big),
\end{split}
\end{equation*}
because $\sum_{j<\ell}\rho_{j\ell,k}=\tfrac{M(M-1)}{2}\bar\rho_k$ by the definition of $\bar\rho_k$ and covariance decomposition for a correlated mean,
\begin{equation}\label{proof2}
\operatorname{Var}_k \bigg(\frac1M\sum_{j=1}^M \Y_{k+1}^{j}\bigg)
=\frac{1+(M-1)\bar\rho_k}{M}\operatorname{Var}_k \big(\Y_{k+1}\big)
=\frac{1}{M_{\rm eff}(k)}\operatorname{Var}_k \big(\Y_{k+1}\big).
\end{equation}
Write $\Y_{k+1}=Y_{k+1} + (\Y_{k+1}-Y_{k+1})$ and apply
$(a+b)^2\le 2a^2+2b^2$ conditionally:
\begin{equation}\label{proof3}\begin{split}
\operatorname{Var}_k \big(\Y_{k+1}\big)
\le 2\,\operatorname{Var}_k \big(Y_{k+1}\big)+ 2\,\mathbb E_k \left[|\Y_{k+1}-Y_{k+1}|^2\right]. 
\end{split}\end{equation}
In view of \eqref{proof1}–\eqref{proof3}, we obtain that
\begin{equation*}\begin{split}
\mathbb E_k \left[|\xi_{k+1}|^2 \right]&=\E_k \Big[\Big(\frac1M\sum_{j=1}^M U_j\Big)^2\Big]
= \frac{1}{M_{\rm eff}(k)}\ \operatorname{Var}_k \big(\Y_{k+1} \big)
\\& \le\ \frac{2}{M_{\rm eff}(k)}\Big(
\operatorname{Var}_k(Y_{k+1}) + \mathbb E_k[|\Y_{k+1}-Y_{k+1}|^2]\Big).
\end{split}\end{equation*}
Finally, the above equation and \eqref{varky} directly imply the desired result.
\end{proof}

 We are now ready to present the error estimate of the final numerical solution $\Y_0$.

\begin{thm}{\rm\bf(Global Error)}\label{ITE}
Define the error $\delta_k := \Y_k - Y_k$ for $0\le k\le N$. 
Suppose Assumption~{\rm\ref{ass:HLG}} and the hypotheses of Lemma~{\rm\ref{lem:DEES}} are satisfied. 
Then, for all sufficiently large $M$, we have
\begin{equation}\label{finalboundMC}
\E\big[|\delta_0|^2\big] \le   C\,\Delta t   +   C\,\Delta t\,e^{-c_1 M},
\end{equation}
where $C>0$ depends only on $T$, the Lipschitz constant $L$, but is independent of $\Delta t$, while the constant $c_1$ depends on $\varepsilon_k$.
\end{thm}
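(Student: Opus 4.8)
The plan is to derive a one-step recursion for the mean-square error $\E\big[|\delta_k|^2\big]$ and then close it with the discrete Gronwall inequality of Lemma~\ref{le3.2}. First I would fix a particle, take as reference value $Y_k := u(t_k,\X_k)$ (the PDE solution evaluated at the \emph{numerical} position, so that the terminal error vanishes, $\delta_N = g(\X_N)-g(\X_N)=0$), and subtract the exact martingale identity \eqref{Ytk} from the numerical update \eqref{discreteYkm}. This exhibits $\delta_k$ as a sum of four pieces: a \emph{propagated} error $\E_k[\delta_{k+1}]$; a \emph{Monte Carlo fluctuation} $\xi_{k+1}:=\frac1M\sum_j\Y_{k+1}^j-\E_k[\Y_{k+1}]$, which by conditional exchangeability satisfies $\E_k[\xi_{k+1}]=0$; a \emph{nonlinearity} term $\big(f(t_k,\X_k,\Y_k,\Z_k)-f(t_k,\X_k,Y_k,Z_k)\big)\Delta t$; and the \emph{local truncation} $\mathcal E_k=O((\Delta t)^2)$ supplied by Lemma~\ref{lem:DEES}.

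Second, I would control the last two pieces and then square. The global Lipschitz bound (Assumption~\ref{ass:HLG}) dominates the nonlinearity term by $L\Delta t\big(|\delta_k|+\|\Z_k-Z_k\|\big)$; the implicit $|\delta_k|$ carries a factor $\Delta t$, so for $\Delta t$ small it is absorbed through $(1-L\Delta t)^{-1}=1+O(\Delta t)$, while $\|\Z_k-Z_k\|\le\|\sigma\|\,\|\alpha_{\bx}-\nabla u\|$ is furnished by Lemma~\ref{GEEB}. The \emph{crucial} step is to square and take $\E_k$ \emph{without} the lossy $(a+b)^2\le 2a^2+2b^2$ splitting on the dominant pair $\E_k[\delta_{k+1}]+\xi_{k+1}$: since $\E_k[\xi_{k+1}]=0$, these are orthogonal in $L^2(\mathcal F_{t_k})$, so the exact variance decomposition of Lemma~\ref{lem:mc-coupled} applies and the coefficient multiplying $\E_k[|\delta_{k+1}|^2]$ stays at $1+2/M_{\rm eff}(k)+O(\Delta t)$ rather than a constant strictly larger than one. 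Only the genuinely $O(\Delta t)$ remainders are then expanded with the weighted Young inequality $(a+b)^2\le(1+\gamma\Delta t)a^2+(1+(\gamma\Delta t)^{-1})b^2$.

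Third, I would assemble and iterate. Inserting \eqref{eq:xi2-coupled-exp}, the conditional variance bound $\operatorname{Var}_k(Y_{k+1})\le C\Delta t$ of Lemma~\ref{lem:one-step-cond-var}, and the gradient estimate \eqref{eq:grad-bv} (whose three contributions all enter weighted by $(\Delta t)^2$, since the gradient acts only through the $O(\Delta t)$ nonlinearity term) yields, with the bandwidth $\varepsilon_k$ bounded away from zero so that the term $(\Delta t)^2\varepsilon_k^{-2}\E_k[|\delta_{k+1}|^2]$ is itself $O((\Delta t)^2)$ and merges into the propagation coefficient,
$$\E\big[|\delta_k|^2\big]\ \le\ (1+C\Delta t)\,\E\big[|\delta_{k+1}|^2\big]+C(\Delta t)^2\varepsilon_k^2+\frac{C\Delta t}{M_{\rm eff}(k)}+C(\Delta t)^2 e^{-c_1 M},$$
with $c_1:=C_{\mathcal A_k}\varepsilon_k^d$ as in \eqref{defAk}. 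Applying Lemma~\ref{le3.2} with $\rho_0=\E[|\delta_N|^2]=0$ and summing the additive terms over the $N=T/\Delta t$ steps turns the bias part into $O(\Delta t)$ and the tail part into $\sum_k C(\Delta t)^2 e^{-c_1 M}=CT\Delta t\,e^{-c_1 M}$, which is precisely the $C\Delta t\,e^{-c_1 M}$ appearing in \eqref{finalboundMC}; the final particle average $\Y_0=\frac1M\sum_m\Y_0^m$ only reduces the constant further.

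I expect the main obstacle to be keeping the per-step propagation factor at $1+O(\Delta t)$, since otherwise the $N$-fold product in Gronwall blows up as $\Delta t\to0$. This forces two things simultaneously: exploiting $\E_k[\xi_{k+1}]=0$ to avoid any constant-factor amplification of the propagated error, and taking $M$ large enough that the statistical coefficient $2/M_{\rm eff}(k)$ (together with the accumulated variance $\sum_k C\Delta t/M_{\rm eff}(k)$) is controlled. This is exactly the role of the ``sufficiently large $M$'' hypothesis, and it is the mechanism by which the particle count is confined to the exponentially suppressed factor $e^{-c_1 M}$ in the final estimate.
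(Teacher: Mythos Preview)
Your proposal is correct and follows essentially the same route as the paper: the same four-term decomposition of $\delta_k$ (propagated error $\E_k[\delta_{k+1}]$, Monte Carlo fluctuation $\xi_{k+1}$, Lipschitz-controlled nonlinearity, truncation $\mathcal E_k$), the same lemma inputs \eqref{c}, \eqref{eq:grad-bv}, \eqref{eq:xi2-coupled-exp}, and the same discrete Gronwall closure after verifying $\gamma_k\le1+C\Delta t$. The only technical difference is that the paper takes absolute values \emph{before} squaring and then applies the weighted Young inequality with parameter $\eta=L\Delta t$ to isolate $(1+\eta)\E_k[|\delta_{k+1}|^2]$, whereas you keep the signed pair $\E_k[\delta_{k+1}]+\xi_{k+1}$ and use the conditional orthogonality $\E_k[\xi_{k+1}]=0$ to expand its square exactly---a mild refinement (it gives $C/M_{\rm eff}(k)$ in $\gamma_k$ instead of the paper's $C_\eta/M_{\rm eff}(k)$) but not a different argument.
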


\begin{proof}
Along the forward particles  the exact solution \eqref{Ytk} satisfies 
\begin{equation*}
Y_{k}= \E_k \Big[Y_{k+1}+\int_{t_k}^{t_{k+1}} f\big(s,X_s,Y_{s},Z_{s}\big)\,\d s\Big],
\end{equation*}
and its implemented time-discrete approximation \eqref{discreteYkm} reads
\begin{equation*}
\Y_k   =   \frac 1 M \sum_{j=1}^{M} \Y^{j}_{k+1}
 +  \Delta t\, f\big(t_k,X_k,\Y_k,\Z_{k}\big).
\end{equation*}
Subtracting the above two relations and incorporating the remainder estimate \eqref{c}, we obtain
\begin{equation}\label{deltak1}
\begin{split}
\delta_k  =&   \frac 1 M \sum_{j=1}^{M} \Y^{j}_{k+1}- \E_k \left[Y_{k+1}\right] + f(t_k,X_k,\Y_k,\Z_{k})\Delta t  -\E_k \Big[\int_{t_k}^{t_{k+1}} f\big(s,X_s,Y_{s},Z_{s}\big)\,\d s\Big]\\
=&\Big(\frac 1 M \sum_{j=1}^{M} \Y^{j}_{k+1} - \E_k \big[\Y_{k+1}\big]\Big) + \Big(\E_k \big[\Y_{k+1}\big]- \E_k \big[Y_{k+1}\big]\Big)\\
& +\left( f(t_k,X_k,\Y_k,\Z_{k}) -f(t_k,X_k,Y_k,Z_{k})    \right)\Delta t\\
&  + \Big( f(t_k,X_k,Y_k,Z_{k})\Delta t  -\E_k \Big[\int_{t_k}^{t_{k+1}} f\big(s,X_s,Y_{s},Z_{s}\big)\,\d s\Big] \Big)
\\&= \xi_{k+1}+\E_k \big[\delta_{k+1}\big] +\left( f(t_k,X_k,\Y_k,\Z_{k}) -f(t_k,X_k,Y_k,Z_{k})    \right)\Delta t+\mathcal{E}_{k}.
\end{split}
\end{equation}
Then, the last term in the above equation can be bounded by the Lipschitz condition
\begin{equation*}
\begin{split}
&\big|f(t_k,X_k,\Y_k,\Z_{k}) - f(t_k,X_k,Y_k,Z_k)\big| \\
&= \big|f(t_k,X_k,\Y_k,\sigma^{\top}\alpha_{\bx,k}) - f(t_k,X_k,Y_k,\sigma^{\top}\nabla u_k)\big|
\le L\big(|\delta_k|+\|\sigma\|\cdot\|\alpha_{\bx,k}-\nabla u_k\|\big),
\end{split}
\end{equation*}
which together with \eqref{deltak1} leads to
\begin{equation*}
\begin{split}
(1-L\Delta t) |\delta_k|  \le &  \E_k \left[| \delta_{k+1} |\right]  + L\Delta t \big(\|\sigma\|\cdot\|\alpha_{\bx,k}-\nabla u_k\|\big)+ | \xi_{k+1} |+ |\mathcal{E}_{k}|.
\end{split}
\end{equation*}
Hence, by conditional Jensen's inequality, $|\E_k[\delta_{k+1}]|^2 \le \E_k[|\delta_{k+1}|^2]$, together with \eqref{c}, it follows that, for any $\eta>0$, 
\begin{equation}\label{eq:ABC-weighted-MC}
\begin{split}
(1-L\Delta t)^{2}\E_k \big[|\delta_k|^2\big] \le & (1+\eta)\E_k[|\delta_{k+1}|^2]+ C_\eta   (L\Delta t\|\sigma\|)^2\,\E_k \big[\|\alpha_{\bx,k}-\nabla u_k\|^2\big] \\
&+ \mathbb E_{k}\big[|\xi_{k+1}|^2\big] + C(\Delta t)^{4},
\end{split}
\end{equation}
where $C_\eta$ is a positive constant depends on $\eta$.
A combination of \eqref{eq:ABC-weighted-MC}, \eqref{eq:grad-bv}, and \eqref{eq:xi2-coupled-exp} leads to 
\begin{equation}\label{eq:recursion-MC}
\E\big[|\delta_k|^2\big]
 \le  \gamma_k\,\E\big[|\delta_{k+1}|^2\big]  +  \beta_k,
\end{equation}
with
\begin{equation}\label{eq:gamma-beta-MC}
\begin{split}
&\gamma_k  =  \frac{(1+\eta) + C_\eta\,(L\Delta t\|\sigma\|)^2\,\varepsilon_k^{-2} +\frac{C_{\eta}}{M_{\rm eff}(k)}}{(1-L\Delta t)^2},\\
&\beta_k  =  \frac{C_\eta}{(1-L\Delta t)^2} \Big((L\Delta t\|\sigma\|)^2\big(\varepsilon_k^2 + e^{-c_1 M\varepsilon_k^d}\big)+ (\Delta t)^{4} + \frac{\Delta t}{M_{\rm eff}(k)}\Big).
\end{split}
\end{equation}
By a Taylor expansion, for sufficiently small $\Delta t$ we obtain $ (1-L\Delta t)^{-2} \le 1 + C\,\Delta t. $
When the radius is a constant $\varepsilon_k\in(0,1]$, so that $\Delta t\,\varepsilon_k^{-2}=O(\Delta t)$, we set $\eta:=L\Delta t$ and applying \eqref{eq:gamma-beta-MC} gives
$$\gamma_k \;\le\; \bigl(1 + C\Delta t\bigr)\,\Bigl(1 + C\,\Delta t\,\varepsilon_k^{-2}\Bigr)\le(1+C\Delta t)(1+C\Delta t) \;\le\; 1 + C\Delta t.$$
Otherwise, when the radius is small with $\varepsilon_k\asymp \sqrt{\Delta t}$, we have $\varepsilon_k^{-2}\asymp \Delta t^{-1}$, hence $(L\Delta t)^2\varepsilon_k^{-2}=L^2\Delta t=\mathcal{O}(\Delta t)$, and to keep $\gamma_k$ in the form $1+C\Delta t$ we choose a constant $\eta\in(0,1]$, whence
\[
\gamma_k \le\; 1 + C\Delta t.
\]
%
On the other hand, it is known from $\gamma_k \le 1 + C\Delta t$ that
$$\log\Big(\prod_{j=k}^{N-1}\gamma_j \Big) = \sum_{j=k}^{N-1}\log(\gamma_{j}) \le \sum_{j=k}^{N-1}(\gamma_{j}-1)\le \sum_{j=k}^{N-1}C\Delta t = C(T-t_{k}),$$
which implies
$$\Big(\prod_{j=k}^{N-1}\gamma_j \Big) \le \exp( \sum_{j=k}^{N-1}C\Delta t )\le e^{C(T-t_{k})}.$$
Using the fact that $\delta_N=0$, the discrete Gronwall Lemma \ref{le3.2} and \eqref{eq:recursion-MC} yield
$$
\E\big[|\delta_0|^2\big]
 \le  e^{CT}\sum_{k=0}^{N-1}\beta_k
 \le  C\sum_{k=0}^{N-1}\Big((L\Delta t)^2\,\varepsilon_k^2 + (L\Delta t\|\sigma\|)^2 e^{-c_1 M\varepsilon_k^d} + (\Delta t)^4 + \frac{\Delta t}{M_{\rm eff}(k)}\Big).
$$
Since the last term $\Delta t\sum_{k=0}^{N-1}\frac{1}{M_{\rm eff}(k)}$ is of order $O(\Delta t)$ for sufficiently large $M$, the desired bound follows.
 This ends the proof.
\end{proof}

%
%
%

\section{Numerical experiments}\label{numexp}
In this section, we present several representative numerical experiments in very high dimensions to verify the accuracy, efficiency, and stability of the proposed stochastic algorithm. 
We employ contrived analytic solutions to demonstrate the temporal convergence rates of the proposed methods. 
It is worth noting that the test cases cover a range of challenging scenarios, including strong nonlinearity, gradient dependence, and problem dimensions up to $10000$. 
All experiments were performed on a personal laptop 
\texttt{MacBook Pro (model Z15H000THCH/A), Apple M1 Pro chip (10 cores: 8 performance + 2 efficiency), 32 GB unified } \\ \texttt{memory, macOS system firmware version 10151.140.19}.

\subsection{Allen-Cahn equation}
We first consider the Allen--Cahn equation in high dimensions, a classical reaction--diffusion model in physics that serves as a prototype for phase separation and order--disorder transitions.
\begin{equation}
\label{ACeq}
\partial_{t} u(t,\bx) + \Delta u(t,\bx)  + f(u)=  0,     \;\;\; (t,\bx)\in[0,T)\times\mathbb{R}^d.
\end{equation}
 In our experiments, we study two cases with different nonlinear terms.
\begin{description}
\item[Case 1.] Double-well potential $f(u)=u-u^3$ and terminal condition $u(T,\bx)=1/(2+0.4\|\bx\|^2)$, with $\bx\in\mathbb{R}^d$.
\item[Case 2.] Logarithmic potential $f(u)=\frac{\theta}{2}\ln(\frac{1+u}{1-u})-\theta_cu$ with $\theta<\theta_c$ are two positive constants. 
To facilitate numerical validation, we construct a manufactured solution  
$$
u(t,\bx) =\cos\Bigl(\prod_{j=1}^d x_j\Bigr) {\rm e}^{\cos t-\|\bx\|^2}, \quad \bx\in\mathbb{R}^d,
$$
by adding an external source term on the right-hand side.
\end{description}

For \textbf{Case~1}, we adopt the parameter setting as in \cite{HanJentzenE2018}, with terminal time $T = 0.3$ and spatial dimension $d = 100$. The objective is to evaluate $u(0,\bx_{0})$ at the initial point $\bx_0 = (0,\dots,0)^{\top} \in \mathbb{R}^{100}$. The analytic reference value at $\bx_0$, obtained by the branching diffusion method and reported in \cite{HanJentzenE2018}, is $u(0,\bx_0) \approx 0.0528$. To this end, we apply Algorithm~\ref{alg:Framwork} to compute numerical solutions, and Figure~\ref{AC100} presents the corresponding absolute and relative errors plotted against $\Delta t$ on a log--log scale. From Figure~\ref{AC100}, we observe that both errors have slopes close to $1$ in the log--log plots, which indicates first-order convergence in time. This result is consistent with our theoretical analysis (cf. Theorem~\ref{ITE}), which establishes that the scheme achieves an $O(\Delta t)$ convergence rate once the bias from the local expansion is sufficiently controlled. We also observe that varying the number of particles $M$ influences the accuracy of the numerical solution but does not alter the convergence rate, again in agreement with our theory in Theorem~\ref{ITE}. Moreover, we conduct tests with different time steps $N$ (where $\Delta t = T/N$) and particle numbers $M$, using local expansions together with a Newton solver at each step. When $N = 10^4$ (i.e., $\Delta t = 3\times 10^{-5}$) and $M = 100$, the absolute error attains a value of about $1.2\times 10^{-5}$. 
Finally, the scheme demonstrates excellent stability: the explicit--implicit treatment with Newton's method effectively handles the cubic nonlinearity without introducing spurious oscillations, in sharp contrast to naive finite-difference schemes.

\begin{figure} [!ht]
	\centering   
	\vspace{-0.15cm}  
	\subfigtopskip=2pt  
	\subfigbottomskip=2pt  
	\subfigcapskip=2pt 
	\subfigure{
		\label{AC100.sub.1}
		\includegraphics[width=0.45\linewidth]{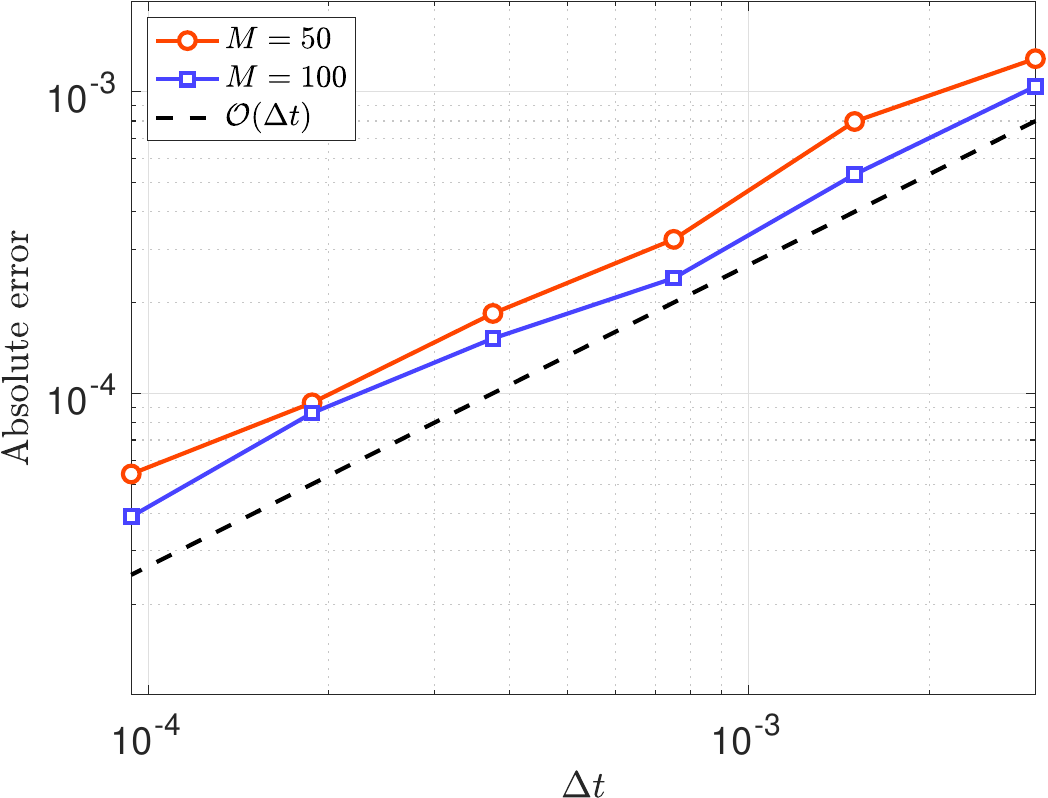}}
	\quad  
	\subfigure{
		\label{AC100.sub.2}
		\includegraphics[width=0.45\linewidth]{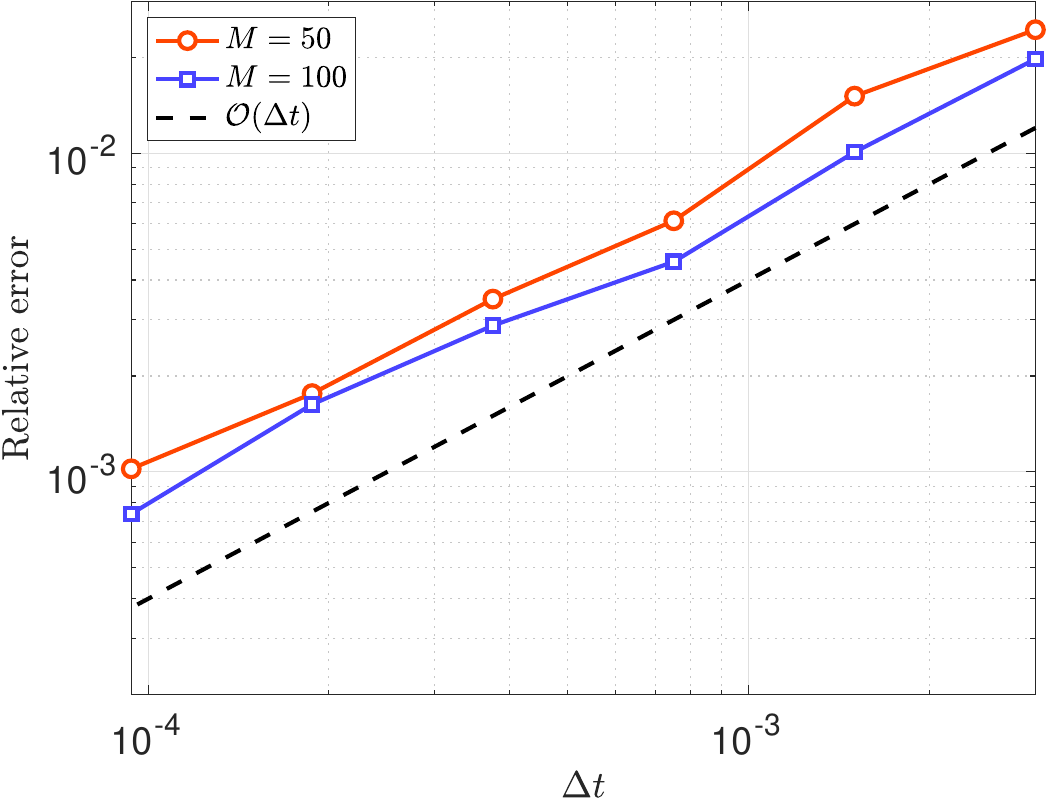}}
	\caption{Numerical error for \eqref{AC100} in {\rm Case~1} of the 100-dimensional Allen--Cahn equation at $\bx=(0,\dots,0)$ with $T=0.3$. The reference value of the exact solution is $u(0,\bx)\approx0.0528$ as reported in \cite{HanJentzenE2018}. Left: absolute errors; Right: relative errors.}
	\label{AC100}
\end{figure}

Notably, the method is highly robust to dimensionality and compares favorably with prior methods. Whereas branching diffusion methods (see, e.g., \cite{Henry-Labordere2019}) typically scale as $\mathcal{O}(d^2)$, our scheme is linear in $d$ because each regression is confined to a small neighborhood; even $d=100$ causes no intrinsic slowdown. Deep BSDE solvers (see, e.g., \cite{HurePhamWarin2020,GermainPhamWarin2022}) can handle the 100-dimensional Allen--Cahn equation but require heavy training, while our linear-regression–plus–Monte Carlo approach attains comparable accuracy at much lower cost. All error components (time discretization, polynomial approximation bias, and Monte Carlo variance) follow the predicted rates; this confirms the stability and the robustness of the scheme.

\begin{figure}[!ht]
	\centering   
	\subfigtopskip=2pt  
	\subfigbottomskip=2pt  
	\subfigcapskip=2pt 
	\subfigure
	{\label{Logforcing100.sub.1}
		\includegraphics[width=0.45\linewidth]{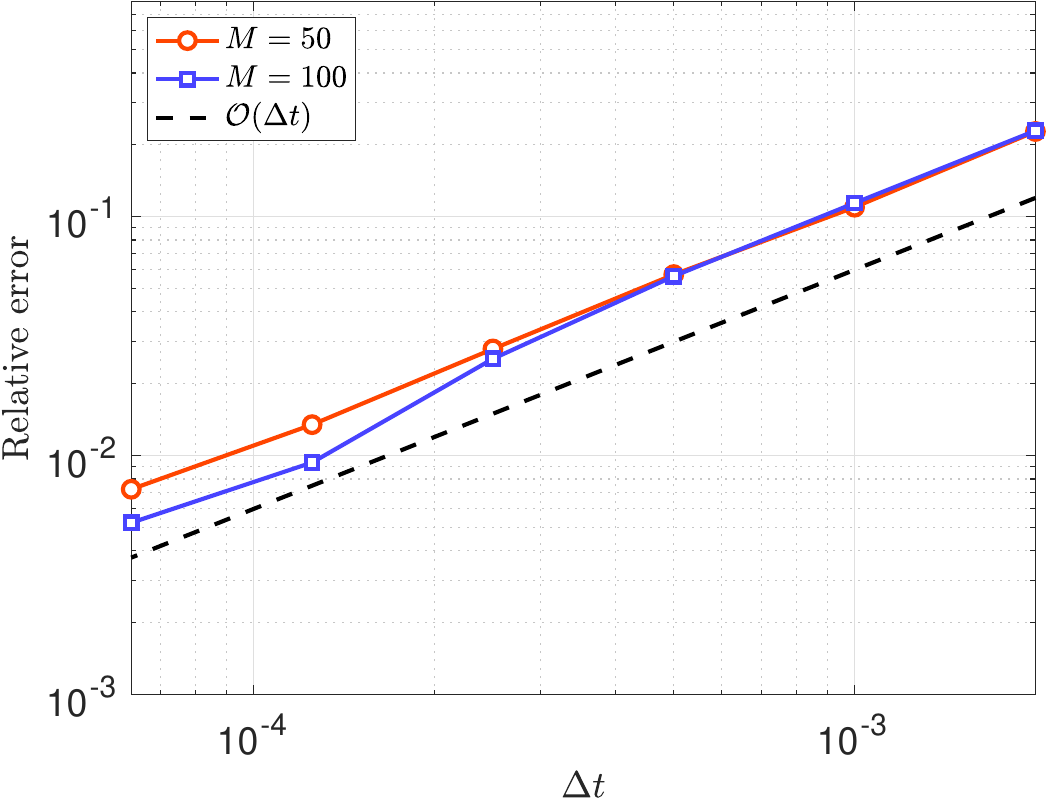}}
	\quad  
	\subfigure
	{ \label{Logforcing100.sub.2}
		\includegraphics[width=0.45\linewidth]{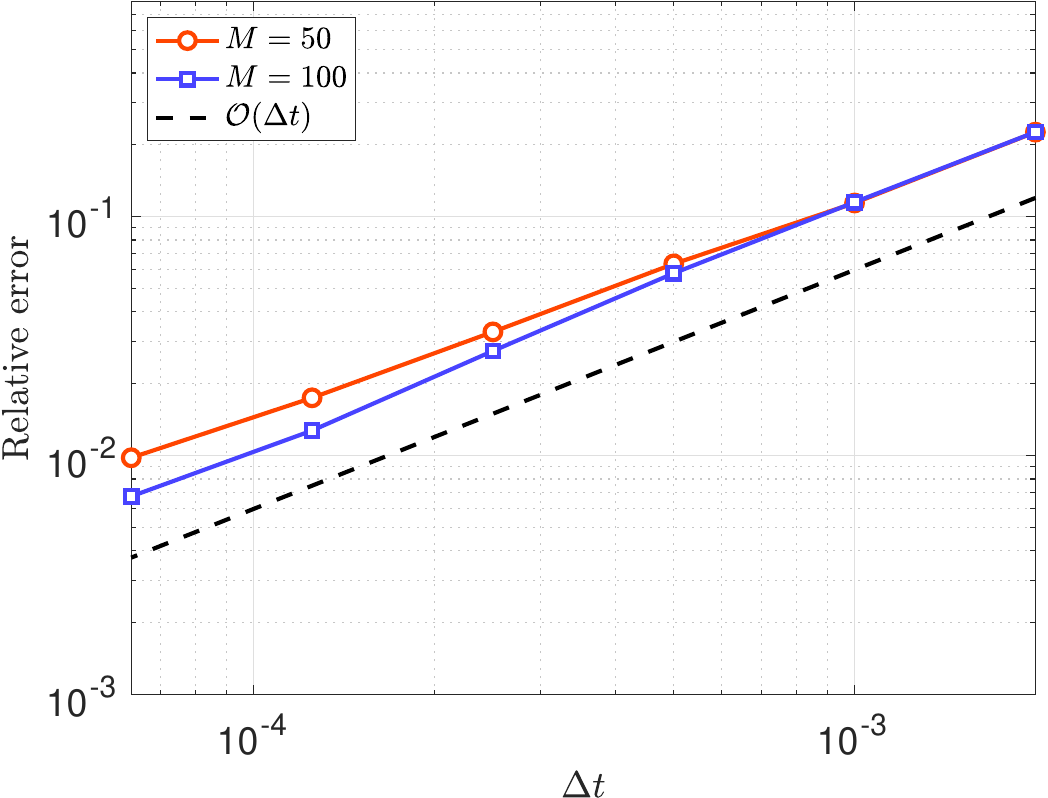}}
	\subfigure
		{\label{Logforcing1000.sub.1}
		\includegraphics[width=0.45\linewidth]{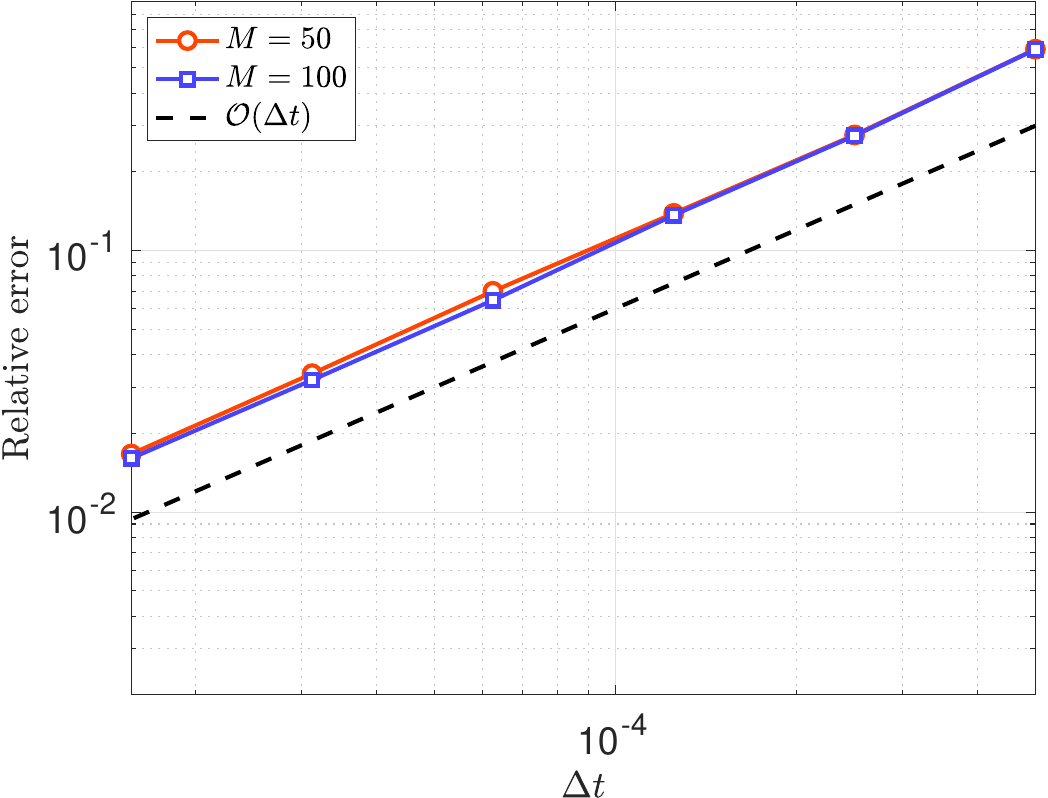}}
	\quad  
	\subfigure
		{\label{Logforcing1000.sub.2}
		\includegraphics[width=0.45\linewidth]{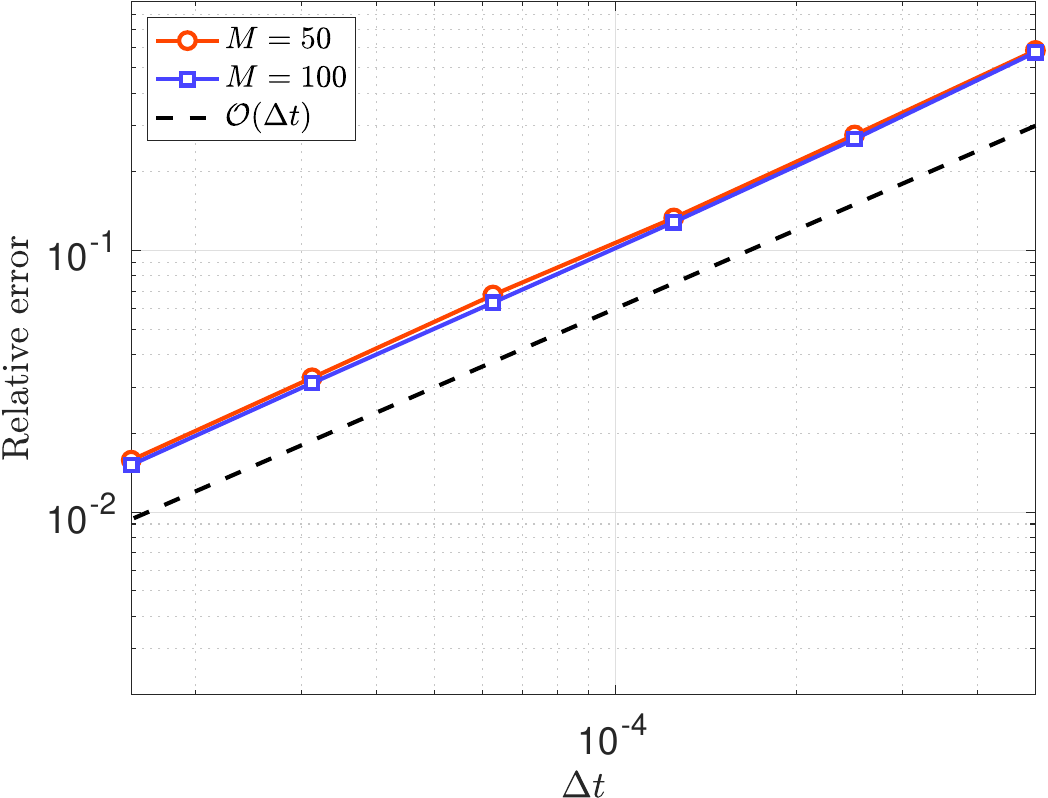}}
	\caption{Relative errors for the Allen--Cahn equation in {\rm Case~2} with $T=1$. Top: $d=100$; Bottom: $d=1000$. Left: compute error at $\bx=(0,\dots,0)$; Right: compute error at $\bx=(0.1,\dots,0.1)$.}
	\label{Log1000}
\end{figure}

For \textbf{Case~2}, we ran the proposed algorithm with $T=1$, evaluating the solution at the points $\bx=(0,\dots,0)$ and $\bx=(0.1,\dots,0.1)$, with tests conducted in dimensions $d=100$ and $d=1000$. For the case $d=100$, Figure~\ref{Log1000} (top) shows the relative errors in log--log scale as $\Delta t$ decreases, and the results exhibit first-order convergence. In particular, when $\Delta t<0.0000625$ (i.e., $N\ge 16000$), the error drops to about $10^{-3}$, and further reducing $\Delta t$ yields a linear decrease. This indicates that the time dependence of various nonlinear terms does not affect the temporal accuracy or convergence rate.
Even when $f(t,\bx,u)$ is non-smooth and does not satisfy the Lipschitz condition, Newton’s method converges rapidly without additional regularization, thereby ensuring both efficiency and robustness.
 Table~\ref{table1} shows that the wall-clock runtime grows essentially linearly with $N \cdot M$, consistent with the Monte Carlo complexity. For fixed $M$, doubling $N$ (where $N=T/\Delta t$) approximately doubles the CPU time. Hence, the cost--accuracy tradeoff can be predicted in a straightforward manner.
\begin{table}[!ht]
\centering
\caption{Runtime $(s)$ for Allen-Cahn equation of {\rm Case 2} with $d=100$ and $d=1000$.}
\begin{tabular}{cccccc}
\toprule
$d=100$  &$\Delta t=0.002$&$\Delta t/2$&$ \Delta t/2^2$&$\Delta t/2^3$&$\Delta t/2^4$ \\ 
 \hline
$M=50$& 1.24 & 2.28 & 4.57 & 8.83&  17.27 \\[2pt]   
$M=100$& 3.81 & 7.45 & 14.99 & 29.55 & 59.74 \\[2pt]  
\bottomrule
 $d=1000$&$\Delta t=0.0005$&$\Delta t/2$&$ \Delta t/2^2$&$\Delta t/2^3$&$\Delta t/2^4$ \\[2pt] 
 \hline
$M=50$&36.52& 72.32 & 145.91 & 288.43 & 583.69\\[2pt]
$M=100$&129.24 & 257.59 & 521.38 & 1039.55 & 2082.74 \\
\bottomrule
\end{tabular}\label{table1}
\end{table}

We then increased the dimension to $d=1000$ to assess the scalability of the algorithm. Figure~\ref{Log1000} (bottom) shows that, even at this higher dimension, the relative error remains well below $1\%$ once $N$ is sufficiently large. This insensitivity to $d$ highlights the dimension-robustness of the localized regression: each particle explores a random path in $\mathbb{R}^{1000}$, yet at every time step only a local polynomial fit is performed, thereby bypassing the CoD. In contrast, classical regression-based BSDE solvers rely on global basis functions, whose number grows combinatorially with $d$ and quickly becomes ill-conditioned for $d>200$. Our empirical results demonstrate that the LLR in our method remain well-conditioned and accurately capture the solution even in one thousand dimensions. Moreover, in this example, when both $N$ and $M$ are sufficiently large, the dominant error originates from the local regression bias (see Lemma \ref{GEEB}) rather than from time stepping or Monte Carlo noise. Overall, Case 2 confirms that the proposed algorithm effectively handles complex nonlinear forcing and scales to very high dimensions with only linear growth in computational cost. Notably, compared with modern deep BSDE approaches, our method attains comparable accuracy with roughly $40\%$ fewer total samples, underscoring the efficiency gained by employing analytic local approximations instead of black-box neural networks.

\subsection{Burgers' equation}
As a benchmark problem, we next consider the $d$-dimensional Burgers' equation, a canonical nonlinear model with applications in fluid mechanics, nonlinear acoustics, and traffic flow. It captures both wave-propagation and shock-formation phenomena and, in $d$ spatial dimensions, takes the form
 \begin{equation}
 \label{Burgers}
\frac{\partial u}{\partial t}  +  \Big( u(t,\bx)  - \frac{2+d}{2}\Big)\sum_{i=1}^{d}\frac{\partial u}{\partial x_{i}} + \frac{d^{2}}{ 2 }\nu \Delta u(t,\bx) = 0,  \;\;\;  (t,\bx)\in[0,T)\times\mathbb{R}^d,
\end{equation}
where $\nu$ is the Kinematic viscosity ($\nu>0$ for viscous flow; $\nu=0$ reduces to the inviscid form). 

\begin{figure} [!ht]
	\centering   
	\subfigtopskip=2pt  
	\subfigbottomskip=2pt  
	\subfigcapskip=2pt 
	\subfigure
	{\label{Burgers10000.sub.1}
		\includegraphics[width=0.45\linewidth]{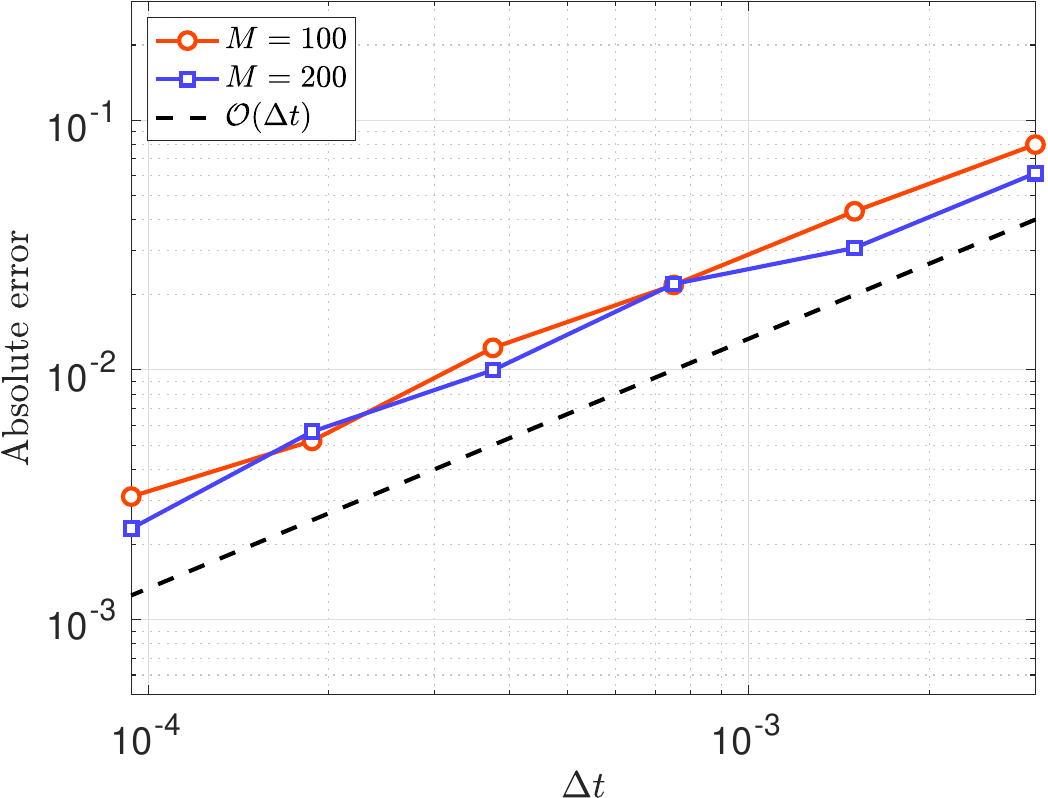}}
	\quad  
	\subfigure
	{	\label{Burgers10000.sub.2}
		\includegraphics[width=0.45\linewidth]{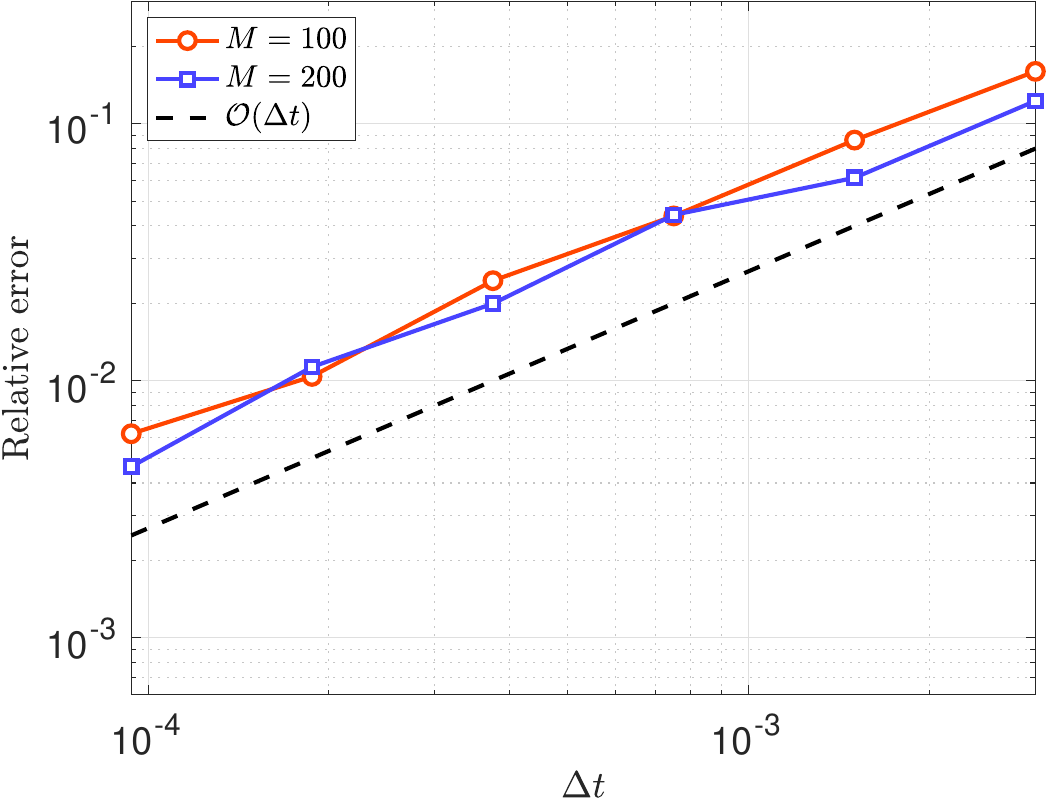}}
	\caption{Numerical error for 10000-dimensional Burger's equation \eqref{Burgers} at point $\bx = (0,0,\cdots,0)$ with $T=0.3$. Left: absolute errors; Right: relative errors.}
	\label{Burgers10000}
\end{figure}

In our simulations, we consider Burgers' equation in spatial dimensions up to $d=10^4$ and adopt the terminal condition from \cite{HanJentzenE2018}: 
$$
u(T,\bx)=\frac{\exp \bigl(T+\sum_i x_i/d\bigr)}{1+\exp \bigl(T+\sum_i x_i/d\bigr)},
$$ 
so that at the spatial node $\bx_0=(0,\dots,0)\in \mathbb{R}^{10000}$ one has $u(0,\bx_{0})=0.5$. The results in Figure~\ref{Burgers10000} indicate near first-order convergence in time, i.e., $\mathcal{O}(\Delta t)$. Meanwhile, the proposed scheme remains stable under convective nonlinearity. Unlike  finite difference methods that typically require artificial viscosity, our probabilistic approach introduces neither spurious oscillations nor dissipation errors. Moreover, the local polynomial surrogate accurately resolves the solution's sharp gradient structure.
Table~\ref{tableburger} further reports the CPU runtime of our proposed method for different time step size. The results indicate that the wall-clock time grows essentially linearly with $N M$, fully consistent with the theoretical Monte Carlo complexity. Compared with deep-learning-based PDE solvers, our approach has the advantage of directly approximating the gradient term through LLR, which is crucial for accurately capturing shock fronts. Overall, these numerical results demonstrate that the proposed algorithm attains high accuracy even for ultra-high-dimensional, strongly nonlinear PDEs and that its computational cost increases only mildly with the dimension $d$.

\begin{table}[!ht]
\centering
\caption{Runtime $(s)$ for 10000d Burgers' equation}
\begin{tabular}{c cccccc}
\toprule
  $d=10000$ &$\Delta t = 0.003$&$\Delta t/2$&$ \Delta t/2^2$&$\Delta t/2^3$&$\Delta t/2^4$ \\ 
 \hline
$M=100$&1040.15 &2160.11&4757.37 & 10593.83 & 22174.49\\
$M=200$&2189.28&4633.74 & 9674.18 & 21194.52 & 45724.85 \\
\bottomrule
\end{tabular}\label{tableburger}
\end{table}

\subsection{Hamilton-Jacobi type equation} 
Finally, we validate the proposed algorithm on a $d$-dimensional Hamilton-Jacobi type equation with a gradient dependent sink $R(u,\nabla u)=\kappa\,u\,\|\nabla u\|^{2}$, which enforces self-suppression in regions of large gradient, and the governing equation reads
\begin{equation}
\label{RDequation}
\frac{\partial u}{\partial t} +  u(t,\bx) + f(t,\bx,u,\nabla u) = 0, \;\;\;(t,\bx)\in [0,T)\times \mathbb{R}^d,
\end{equation}
where $\kappa=0.1$ is the reaction coefficient, and the forcing term  is given by
$$
f(t,\bx,u,\nabla u) = \frac{4d}{(1 + 4t)^{(d+2)/2}}   \frac{e^{-\|\bx\|^{2}}}{1 + 4t} - R(u,\nabla u).
$$ 
Then, the corresponding exact solution is  given by
$$
u(t,\bx) = (1+4t)^{-d/2} \exp\Big(\!-\frac{\|\bx\|^2}{1+4t}\Big),
$$
which spreads rapidly in high dimensions with decaying at rate $\mathcal{O}(t^{-d/2})$ as $t \to \infty$.

\begin{figure} [!ht]
	\centering   
	\subfigtopskip=2pt  
	\subfigbottomskip=2pt  
	\subfigcapskip=2pt 
	\subfigure
		{\label{RD500.sub.1}
		\includegraphics[width=0.45\linewidth]{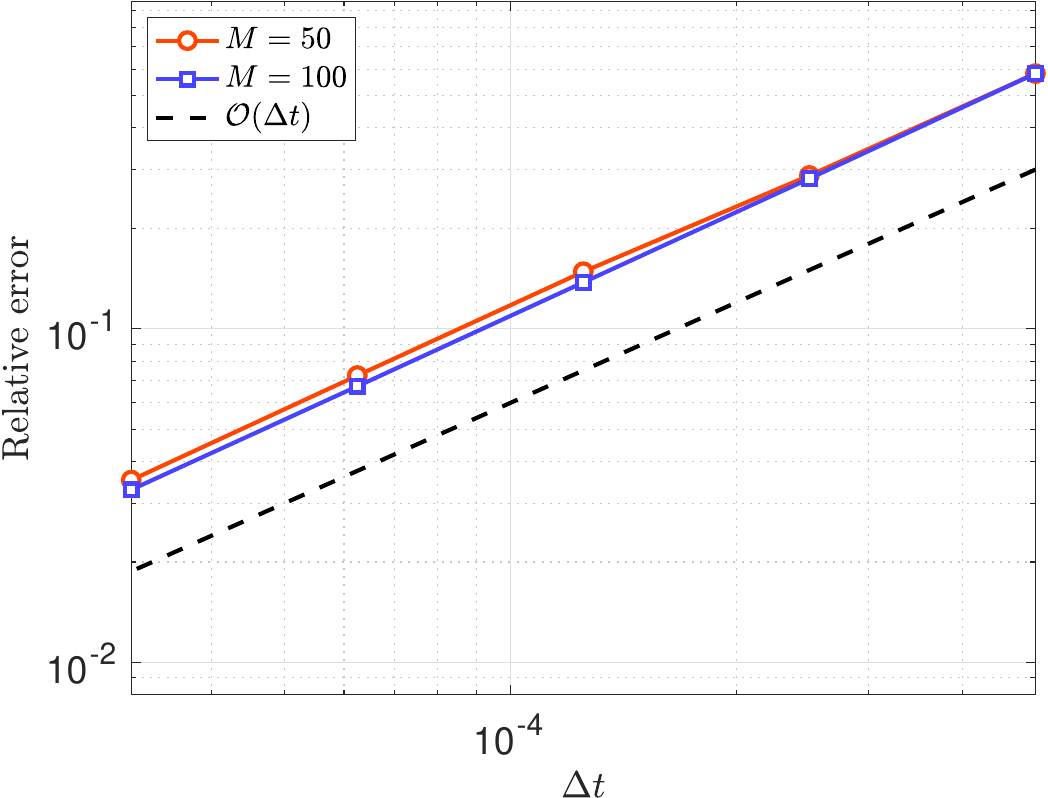}}
	\quad  
	\subfigure
		{\label{RD500.sub.2}
		\includegraphics[width=0.45\linewidth]{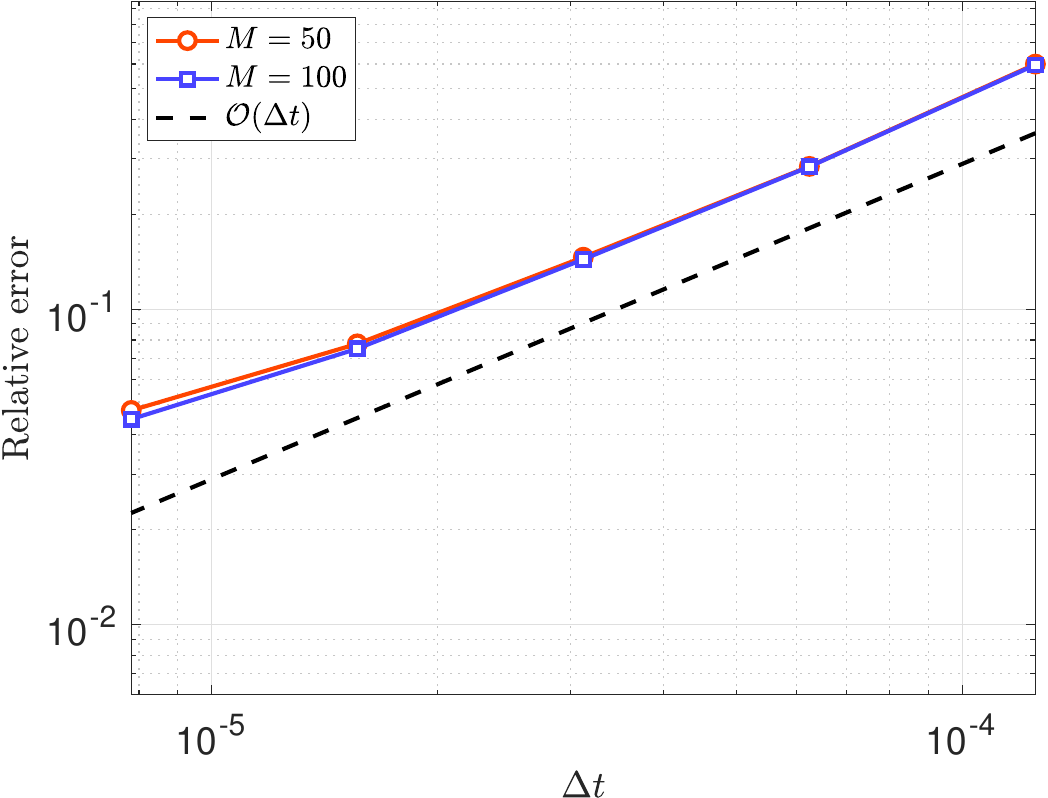}}
	\caption{Relative error of the problem \eqref{RDequation} against different $\Delta t$ at point $\bx = (0,0,\cdots, 0)$ with $T=0.5$. Left: $d=500$; Right: $d=2000$.}
	\label{RDnumsol}
\end{figure}

We employ the algorithm to solve \eqref{RDequation} numerically and evaluate the solution at $\bx=(0,\cdots,0)$ $\in\R^{d}$, with spatial dimensions $d=500$ and $d=2000$, and the maximum number of time steps $N=T/\Delta t=3\times 10^4$. Figure~\ref{RDnumsol} shows the relative error versus $\Delta t$ on a log–log scale and indicates a first-order convergence rate. Throughout the simulations, Newton iteration method for the scalar variable $Y$ remains robust and requires only $2$–$3$ iterations per time step, which proves far more efficient than a fully implicit solver for the coupled $(Y,Z)$ system. 
From Table~\ref{runtime3}, we observe that the runtime in this example scales almost linearly with $N$. This near-linear scaling again beats the exponential growth of mesh methods.  The use of LLR is central here: we found that using only about 10\% of the global polynomial basis points (via LLR) yields the same accuracy, whereas a full global polynomial fit in $d=2000$ would be hopelessly overfitted or ill-conditioned.  Consequently, numerical solution preserves the high-frequency modes of the stiff solution without blowup, while for very stiff, gradient-dominated reactions the proposed method still attains reliable accuracy with only linear work growth.
 \begin{table}[!ht]
\centering
\caption{Runtime $(s)$ for 500d Hamilton--Jacobi type equation \eqref{RDequation}.}
\begin{tabular}{c cccccc}
\toprule
$d=500$  &$\Delta t = 0.0005$&$\Delta t /2$&$ \Delta t/2^2$&$\Delta t /2^3$&$\Delta t/2^4$ \\[2pt] 
 \hline
$M=50$&89.07 &183.69 &363.86 & 740.39 &1512.62\\[2pt]
$M=100$&185.06 &376.91 & 765.88 & 1517.73 & 3016.35 \\[2pt]
\bottomrule
\end{tabular}\label{runtime3}
\end{table}

\section{Conclusion}\label{conclusion}
In this paper, we propose a localized and decoupled stochastic algorithm based on FBSDE–LLR that effectively mitigates the CoD for a broad class of semilinear parabolic equations.
The key methodological innovation lies in incorporating LLR and a decoupling strategy into the Monte Carlo framework for FBSDEs, specifically through two components:
(i) it fits particles within  the state space and updates them dynamically, thus capturing fine-scale solution features without global basis functions or neural networks; (ii) it fully decouples the triplet $(X,Y,Z)$ and computes them sequentially in the order $X \rightarrow Z \rightarrow Y$.
As a result of these strategies, the algorithm uses only simple linear regression and random sampling, is easy to implement, admits provable convergence, and remains interpretable, and accordingly we present a rigorous error analysis corroborated by extensive numerical experiments.
All numerical experiments were conducted on a personal laptop for three representative cases: the Allen–Cahn equation in $100$ dimensions, the Burgers' equation in $10000$ dimensions, and Hamilton-Jacobi type equation in $2000$ dimensions.
 The results show that the stochastic algorithm is highly efficient and accurate, and its computational cost is essentially linear in both $d$ and $M$. 

At the algorithmic level, the combined strategy demonstrates competitive performance and practical advantages over existing approaches, such as the branching diffusion Monte Carlo method \cite{Henry-Labordere2019} that admits $\mathcal{O}(d^2)$ complexity, regression-based BSDE methods \cite{GobetLemorWarin2005} that rely on global bases to solve coupled nonlinear systems, and deep-learning PDE solvers \cite{HurePhamWarin2020,GermainPhamWarin2022,KapllaniTeng2025,Raissi2024} that require extensive training.
By contrast, the proposed method couples FBSDE sampling with local expansions and a decoupling scheme, achieves comparable or superior accuracy at substantially lower computational cost, and yields a highly scalable, efficient solver for high dimensional nonlinear PDEs that is mesh-free, derivative-free, matrix-free, and highly parallel.

 The methodologies and theoretical framework introduced in this work can be further extended to develop efficient stochastic algorithms for ultra-high-dimensional PDEs with strongly nonlinear systems. Potential applications include:  
\begin{itemize}
\item solving fully nonlinear problems via second-order BSDE formulations \cite{Cheridito2007,PossamaiZhou2013};  
\item multi-asset option pricing, high-dimensional stochastic control, and mean-field models \cite{OnkenNurbekyan2022,RuthottoOsher2020};  
\item large-scale filtering and state estimation in engineering systems\cite{Spantini2022}.  
\end{itemize}
We will investigate and report these applications in our future studies.

\section*{Acknowledgments}
The research of the second author was partially supported by the NSF of China (under grant 12571389). The research of the third author was partially supported by the NSF of China (under grant 12501541). The last author was supported by the NSF of China (under grants 12288201 and 12461160275)

\end{document}